\theoremstyle{plain}
\newtheorem{theorem}{Theorem}[section]
\newtheorem{prop}[theorem]{Proposition}
\theoremstyle{definition}
\newtheorem{remark}[theorem]{Remark}
\newtheorem{example}[theorem]{Example}
\newtheorem{cor}[theorem]{Corollary}
\theoremstyle{remark}
\begin{document}
	
	\title [On joint numerical radius and joint numerical index of a Banach space  ]{On joint numerical radius of operators and joint numerical index of a Banach space}

	\author[Arpita Mal]{Arpita Mal}
	
	\address[]{Department of Mathematics\\ Indian Institute of Science\\ Bangalore 560012\\ India.}
	\email{arpitamalju@gmail.com}

	
	\thanks{The author would like to thank SERB, Govt. of India for the financial support in the form of National Post Doctoral Fellowship under the mentorship of Prof. Apoorva Khare.}

	
	\subjclass[2010]{Primary  47A12, Secondary 46B20, 47L05 }
	\keywords{Joint numerical range; joint numerical radius; joint numerical index; operator tuples; Banach space}
	
	

	\date{}
	\maketitle
	\begin{abstract}
	Generalizing the notion of numerical range and numerical radius of an operator on a Banach space, we introduce the notion of joint numerical range and joint numerical radius of tuple of operators on a Banach space. We study the convexity of the joint numerical range. We show that the joint numerical radius defines a norm if and only if the numerical radius defines a norm on the corresponding space. Then we prove that on a finite-dimensional Banach space, the joint numerical radius can be retrieved from the extreme points. Furthermore, we introduce a notion of joint numerical index of a Banach space. We explore the same for direct sum of Banach spaces. Applying these results, finally we compute the joint numerical index of some classical Banach spaces.
	\end{abstract}

	\section{Introduction}
    The purpose of this article is to generalize the notion of numerical range and numerical radius of an operator on a Banach space to $k$-tuple of operators and explore the joint behavior of operators through these concepts. Moreover, we  generalize the notion of numerical index of a Banach space and study the related areas. To proceed further, we introduce the relevant notations and terminologies.\\
	Suppose $H$ is a Hilbert space and $X,Y$ are  Banach spaces. Unless otherwise mentioned, we always assume that the scalar field $F$ is either real or complex. Let $S_X=\{x\in X:\|x\|=1\},$ $B_X=\{x\in X:\|x\|\leq1\}$ and $E_X$ be the set of all extreme points of $B_X.$  $X^*$ denotes the dual space of $X.$ Suppose $L(X,Y)$ denotes the space of all bounded linear operators from $X$ to $Y,$ endowed with the usual operator norm. If $X=Y,$ then we simply write $L(X)$ instead of $L(X,X).$ The symbols $I$ and $O$ denote respectively the identity operator and the zero operator on the corresponding space. For $T\in L(H),$ the numerical range $W(T)$ and the numerical radius $w(T)$ associated with the operator $T$ are defined as follows:
	\[W(T)=\{\langle Tx,x\rangle:x\in S_H\}, ~w(T)=\sup\{|\langle Tx,x\rangle|:x\in S_H\}.\]
	The heart of the theory of  numerical range lies in the Toeplitz-Hausdorff Theorem, which says that $W(T)$ is always convex for $T\in L(H).$ If $H$ is a complex Hilbert space, then $w(\cdot)$ defines a norm on $L(H).$ On the other hand, if $H$ is a real Hilbert space, then $w(\cdot)$ defines a semi norm on $L(H).$  For other nice properties of $W(T)$ and $w(T),$ we refer the readers to \cite{GR,H}.
	Lumer \cite{L} and Bauer \cite{B} generalized the notion of the numerical range of an operator on a Banach space. For $T\in L(X),$ the numerical range $W(T)$ and the numerical radius $w(T)$ are defined as
	\[W(T)=\{x^*(Tx):(x,x^*)\in \Pi_X\},~w(T)=\sup\{|x^*(Tx)|:(x,x^*)\in \Pi_X\},\]
	where $\Pi_X=\{(x,x^*)\in S_X\times S_{X^*}:x^*(x)=1\}.$ In contrast to the Hilbert space, if $T\in L(X),$ $W(T)$ may not be always convex. Note that, $w(\cdot)$ is always a semi norm on $L(X).$ In particular, if $X$ is a complex Banach space, then it defines a norm. However, there are some real Banach spaces $X$ where $w(\cdot)$ is a norm on $L(X).$ Classical references for the theory of numerical range on a Banach space and related areas are \cite{BD,BD2}. There is a constant on a Banach space, known as the numerical index of the space, which relates the behavior of the numerical radius with the usual norm of an operator.  The numerical index $n(X)$ of a space $X$ is defined as follows:
	\[n(X)=\inf\{w(T):T\in L(X),\|T\|=1\}.\]
	Let us recall a few facts on $n(X),$ which we will use the sequel without mentioning further. If $X$ is a complex Banach space, then $\frac{1}{e}\leq n(X)\leq 1$ and if $X$ is a real Banach space, then $0\leq n(X)\leq 1.$ If $H$ is a complex Hilbert space, then $n(H)=\frac{1}{2}$ and for a real Hilbert space $H,$ $n(H)=0.$ Moreover, $n(\ell_1)=n(\ell_1^m)=n(\ell_\infty)=n(\ell_\infty^m)=1,$ where $m\in \mathbb{N}.$ All these results can be found in \cite{BD,KMP}. Some recent references for the study of the numerical index are \cite{MMQ,MQ,SPBB}.
	
	For a $k$-tuple $\mathcal{T}=(T_1,\ldots,T_k)$ of operators on $H,$ the joint numerical range $W(\mathcal{T})$ of $\mathcal{T}$ is a subset of $F^k,$ defined as follows:
	\[W(\mathcal{T})=\{(\langle T_1x,x\rangle,\ldots,\langle T_1x,x\rangle ):x\in S_H\}.\]
	In general, the joint numerical range is not convex (see \cite[Ex. 3.1]{MT}). To get an overview of other interesting properties of the joint numerical range of Hilbert space operators,  the readers may follow the survey article \cite{MT} and the references therein.  In \cite{P} Popescu defined the joint numerical radius $w(\mathcal{T})$ of $\mathcal{T},$ also known as the euclidean operator radius $w_e(\mathcal{T})$, by
	\[w(\mathcal{T})=\sup\Bigg\{\Big(\sum_{i=1}^k|\langle T_ix,x\rangle|^2\Big)^{1/2}:x\in S_H\Bigg\}.\]
	In \cite{MSS}, Moslehian et al. extended the notion of euclidean operator radius in the following way:
		\[w_p(\mathcal{T})=\sup\Bigg\{\Big(\sum_{i=1}^k|\langle T_ix,x\rangle|^p\Big)^{1/p}:x\in S_H\Bigg\}, ~\text{where~}p\geq 1.\]
	 In \cite{MSS,SMS} the authors studied inequalities related to $w_p(\mathcal{T})$ for $\mathcal{T}\in L(H)^k.$ Motivated by the concept of joint numerical range (radius) of Hilbert space operators, we generalize these concepts on a  Banach space. Suppose $T_1,T_2,\ldots,T_k\in L(X,Y)$ and $\mathcal{T}=(T_1,T_2,\ldots,T_k)\in L(X,Y)^k.$  For $1\leq p<\infty,$ we define 
	\[\|\mathcal{T}\|_p=\sup\Bigg\{\Big(\sum_{i=1}^k\|T_ix\|^p\Big)^{1/p}:x\in S_X\Bigg\}.\]
	Using Minkowski's inequality, it is easy to observe that $\|\cdot\|_p$ actually defines a norm on $L(X,Y)^k.$ We call $\|\mathcal{T}\|_p$ as the  $p$-th joint operator norm of $\mathcal{T}.$  Similarly, we define the joint numerical range $W(\mathcal{T})$ of $\mathcal{T}\in L(X)^k$ as follows:
	\[W(\mathcal{T})=\Big\{(x^*(T_1x),x^*(T_2x),\ldots,x^*(T_kx)):(x,x^*)\in \Pi_X\Big\}.\]
	For $1\leq p<\infty,$ we define the $p$-th joint numerical radius $w_p(\mathcal{T})$ of $\mathcal{T}\in L(X)^k$ in the following way:
	\[w_p(\mathcal{T})=\sup\Bigg\{\Big(\sum_{i=1}^k |x^*(T_ix)|^p\Big)^{1/p}:(x,x^*)\in \Pi_X\Bigg\}.\] 
	Next, we generalize the notion of the numerical index of a Banach space. For $1\leq p<\infty,$ and $k\in \mathbb{N},$ we define the $(p,k)$-th joint numerical index of $X$ as follows:
	\[n_{(p,k)}(X)=\inf\{w_p(\mathcal{T}):\mathcal{T}\in L(X)^k,\|\mathcal{T}\|_p=1\}.\]
	Observe that, the $(p,k)$-th joint numerical index of a Banach space is monotonically decreasing with respect to $k.$ Indeed, if $k_1<k_2,$ then $L(X)^{k_1}\subset L(X)^{k_2} ,$ which implies that $n_{(p,k_1)}(X)\leq n_{(p,k_2)}(X).$\\
	Let us now briefly discuss the content of the article. After this introductory section,  in Section 2, we study the joint numerical range and the $p$-th joint numerical radius of a $k$-tuple of operators on a Banach space. We obtain a necessary condition for the convexity of $W(\mathcal{T}).$ Analogous to the numerical radius, we prove that in a finite-dimensional Banach space, $w_p(\mathcal{T})$ can be retrieved from a subset of $\Pi_X,$ namely from the set $G_X=\{(x,x^*)\in \Pi_X:x\in E_X,x^*\in E_{X^*}\}.$ Furthermore, we compare some basic properties of the operator norm and the numerical radius with that of the joint operator norm and the $p$-th joint numerical radius. In Section 4, we explore the $(p,k)$-th joint numerical index of  Banach spaces. In this direction, we first obtain a lower bound and an upper bound of $n_{(p,k)}(X)$ in terms of $n(X).$ Then we study $n_{(p,k)}(X),$ where $X$ is a direct sum of Banach spaces. As a consequence, we can explicitly compute $n_{(p,k)}(X),$ where $X$ is a direct sum of certain class of  Banach spaces. Finally, using the results of this section, we compute $n_{(p,k)}(X),$ where $X\in \{\ell_\infty,\ell_1,\ell_2,\ell_\infty^m,\ell_1^m,\ell_2^m:m\in \mathbb{N}\}.$

	\section{Joint numerical range and the $p$-th joint numerical radius}
	To begin with, we obtain a necessary condition for the convexity of the joint numerical range of an operator tuple on a Banach space.
	\begin{prop}\label{prop-convex}
		Let $\mathcal{T}=(T_1,T_2,\ldots,T_k)\in L(X)^k.$ If $W(\mathcal{T})$ is convex, then $W(T_i)$ is convex for each $1\leq i\leq k.$
	\end{prop}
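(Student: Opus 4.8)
The plan is to realize each individual numerical range $W(T_i)$ as the image of the joint numerical range $W(\mathcal{T})$ under a coordinate projection, and then invoke the elementary fact that linear maps carry convex sets to convex sets. This reduces the proposition to a one-line observation about projections.

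First I would fix $1\leq i\leq k$ and consider the $i$-th coordinate projection $\pi_i:F^k\to F$ given by $\pi_i(z_1,\ldots,z_k)=z_i$. Comparing the defining formulas, for any $(x,x^*)\in\Pi_X$ the point $(x^*(T_1x),\ldots,x^*(T_kx))\in W(\mathcal{T})$ is sent by $\pi_i$ to $x^*(T_ix)$, which is precisely a generic element of $W(T_i)$. Since $(x,x^*)$ ranges over all of $\Pi_X$ in both definitions, this yields the set identity $\pi_i(W(\mathcal{T}))=W(T_i)$.

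Next I would use that $\pi_i$ is linear, hence affine, and therefore preserves convexity: if $a,b\in\pi_i(W(\mathcal{T}))$ with $a=\pi_i(p)$ and $b=\pi_i(q)$ for some $p,q\in W(\mathcal{T})$, then for every $t\in[0,1]$ we have $ta+(1-t)b=\pi_i(tp+(1-t)q)$, and $tp+(1-t)q\in W(\mathcal{T})$ by the assumed convexity of $W(\mathcal{T})$. Hence $ta+(1-t)b\in\pi_i(W(\mathcal{T}))=W(T_i)$, so $W(T_i)$ is convex; since $i$ was arbitrary, this holds for every $1\leq i\leq k$.

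There is no genuine obstacle here: once the projection identity $\pi_i(W(\mathcal{T}))=W(T_i)$ is recorded, the conclusion is immediate from the standard fact that the linear image of a convex set is convex. The only subtlety worth stating explicitly is that the index $i$ may be chosen freely while the pair $(x,x^*)$ still ranges over the full set $\Pi_X$, so that passing to a single coordinate introduces no additional constraint on the admissible pairs.
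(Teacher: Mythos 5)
Your proof is correct and is essentially the same argument as the paper's: the paper lifts two points of $W(T_i)$ to $W(\mathcal{T})$, takes a convex combination there, and reads off the $i$-th coordinate, which is precisely your projection identity $\pi_i(W(\mathcal{T}))=W(T_i)$ combined with the fact that linear images of convex sets are convex. Your packaging via $\pi_i$ is a bit cleaner, but there is no substantive difference in the reasoning.
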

    \begin{proof}
    	Let $W(\mathcal{T})$ be convex. We show that $W(T_i)$ is convex for all $1\leq i\leq k.$ Choose $\lambda_1,\lambda_2\in W(T_i)$ and  $t\in(0,1).$ Then there exist $(x_j,x_j^*)\in \Pi_X$ for $j=1,2,$ such that $\lambda_j=x_j^*(T_ix_j).$  Now, $\Lambda_j=\big(x_j^*(T_1x_j),x_j^*(T_2x_j),\ldots,x_j^*(T_kx_j)\big)\in W(\mathcal{T})$ for $j=1,2.$ Since $W(\mathcal{T})$ is convex, for all $t\in(0,1),$ $t\Lambda_1+(1-t)\Lambda_2\in W(\mathcal{T}).$ Thus, there exists $(x,x^*)\in \Pi_X$ such that 
    	\[t\Lambda_1+(1-t)\Lambda_2=\big(x^*(T_1x),x^*(T_2x),\ldots,x^*(T_kx)\big).\]
    	Hence, $tx_1^*(T_ix_1)+(1-t)x_2^*(T_ix_2)=x^*(T_ix)\in W(T_i).$ Therefore, $W(T_i)$ is convex. This completes the proof.
    \end{proof}
	The converse of Proposition \ref{prop-convex} is not true in general. From \cite{BD2}, it is known that $W(T)$ is always connected for $T\in L(X).$ Therefore, if $X$ is a real Banach space, then $W(T)$ is convex. However, it is easy to construct examples on real Banach spaces,  where the joint numerical range is not always convex. Next, we exhibit such an example.
	
	\begin{example}
		 Consider the real Banach space $X=\ell_1^2.$ Suppose $T,S\in L(X)$ be defined as $T(a,b)=(b,0),~S(a,b)=(0,a)$ for all $(a,b)\in X.$ Observe that $G_X=\{((1,0),(1,\pm1)),((0,1),(\pm 1,1))\}$ and
		 \[\Pi_X=G_X\cup\{((a,b),(sgn ~a,sgn ~b )):a\neq 0,b\neq 0,|a|+|b|=1\},\]
		 where $sgn$ is the usual sign function. Let $\mathcal{T}=(T,S).$ Then it is easy to check that
		 \[W(\mathcal{T})=\{\pm (a,b):0\leq a,b\leq 1,a+b=1\}.\]
		 Thus, $(-1,0),(0,1)\in W(\mathcal{T}).$ However,  $t(-1,0)+(1-t)(0,1)=(-t,1-t)\notin W(\mathcal{T})$ for all $t\in (0,1).$ This proves that $W(\mathcal{T})$ is not convex.	 
	\end{example}
	In the following example, we construct an example of $\mathcal{T}\in L(X)^k$ such that $W(\mathcal{T})$ is convex.
	\begin{example}
		Choose $T\in L(X)$ such that $W(T)$ is convex. Let $T_i=c_iI$ for all $1\leq i\neq j\leq k,$ where $c_i$ are scalars and $T_j=T.$ Consider $\mathcal{T}=(T_1,T_2,\ldots,T_k).$ Then clearly, $W(\mathcal{T})\subseteq \prod_{i=1}^k W(T_i).$ On the other hand, suppose $(a_1,a_2,\ldots,a_k)\in \prod_{i=1}^k W(T_i).$ Now, $a_j\in W(T_j)$ implies that there exists $(x,x^*)\in \Pi_X$ such that $x^*(T_jx)=a_j.$ Moreover, for all $1\leq i\neq j\leq k,$ since $W(T_i)=\{c_i\},$  $a_i=c_i=x^*(c_iIx)=x^*(T_ix).$ Thus, $(a_1,a_2,\ldots,a_k)=(x^*(T_1x),x^*(T_2x),\ldots,x^*(T_kx))\in W(\mathcal{T}).$ Therefore, $\prod_{i=1}^k W(T_i)\subseteq W(\mathcal{T}),$ i.e., $W(\mathcal{T})=\prod_{i=1}^k W(T_i).$ Now, the convexity of $W(\mathcal{T})$ follows from the fact that $\prod_{i=1}^k W(T_i)$ is convex. 
	\end{example} 
	Next, we find out a necessary and sufficient condition for the $p$-th joint numerical radius to define a norm on $L(X)^k.$
	\begin{prop}
		The $p$-th joint numerical radius, $w_p(\cdot)$ defines a norm on $L(X)^k$ if and only if the numerical radius $w(\cdot)$ defines a norm on $L(X).$
	\end{prop}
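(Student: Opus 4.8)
The plan is to reduce both implications to the positive-definiteness clause of the norm axioms. First I would record that $w_p(\cdot)$ is automatically a seminorm on $L(X)^k$: homogeneity is immediate from $|x^*((\lambda T_i)x)|=|\lambda|\,|x^*(T_ix)|$, while subadditivity follows from Minkowski's inequality, exactly as in the verification that $\|\cdot\|_p$ is a norm on $L(X,Y)^k$. Indeed, for any $(x,x^*)\in\Pi_X$ and tuples $\mathcal{S}=(S_1,\ldots,S_k)$, $\mathcal{T}=(T_1,\ldots,T_k)$,
\[\Big(\sum_{i=1}^k|x^*((S_i+T_i)x)|^p\Big)^{1/p}\le\Big(\sum_{i=1}^k|x^*(S_ix)|^p\Big)^{1/p}+\Big(\sum_{i=1}^k|x^*(T_ix)|^p\Big)^{1/p}\le w_p(\mathcal{S})+w_p(\mathcal{T}),\]
and taking the supremum over $\Pi_X$ gives $w_p(\mathcal{S}+\mathcal{T})\le w_p(\mathcal{S})+w_p(\mathcal{T})$. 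Since $w(\cdot)$ is likewise always a seminorm, each of $w_p$ and $w$ is a norm precisely when its only null element is the zero (tuple of) operator(s), so the whole statement is about when these null conditions coincide.

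The key elementary inequality linking the two is obtained by discarding all but one summand: for each index $j$ and each $(x,x^*)\in\Pi_X$ we have $|x^*(T_jx)|\le\big(\sum_{i=1}^k|x^*(T_ix)|^p\big)^{1/p}$, so that $w(T_j)\le w_p(\mathcal{T})$ for every $1\le j\le k$. For the ``if'' direction, I would assume $w$ is a norm and suppose $w_p(\mathcal{T})=0$; then $w(T_j)=0$ for all $j$, whence $T_j=O$ for all $j$ and $\mathcal{T}$ is the zero tuple, so $w_p$ is a norm. For the ``only if'' direction, I would assume $w_p$ is a norm and suppose $w(T)=0$ for some $T\in L(X)$; forming the test tuple $\mathcal{T}=(T,O,\ldots,O)\in L(X)^k$, every summand after the first vanishes, so $w_p(\mathcal{T})=\sup\{|x^*(Tx)|:(x,x^*)\in\Pi_X\}=w(T)=0$. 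Since $w_p$ is a norm, $\mathcal{T}$ is the zero tuple and hence $T=O$, so $w$ is a norm.

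I expect no serious obstacle here: the argument is a matter of sandwiching the individual numerical radii below the joint radius (by dropping terms) and using a single-operator test tuple to run the converse. The one point meriting care is recording at the outset that $w_p$ is a seminorm, so that both implications legitimately reduce to checking triviality of the null set of the (semi)norm; the remaining steps are direct comparisons of suprema.
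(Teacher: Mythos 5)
Your proof is correct and follows essentially the same route as the paper: the seminorm properties come from Minkowski's inequality and homogeneity of the modulus, the forward implication follows because each $w(T_j)$ is dominated by $w_p(\mathcal{T})$, and the converse uses the same test tuple $(T,O,\ldots,O)$. Your framing via the null set of the two seminorms is a slightly tidier packaging, but the mathematical content is identical to the paper's argument.
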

	\begin{proof}
		Suppose that $w(\cdot)$ defines a norm on $L(X).$ Let $\mathcal{T}=(T_1,T_2,\ldots,T_k)\in L(X)^k.$ Then 
		\begin{align*}
			&w_p(\mathcal{T})=0\\
			\Rightarrow & \sup\Bigg\{\Big(\sum_{i=1}^k |x^*(T_ix)|^p\Big)^{1/p}:(x,x^*)\in \Pi_X\Bigg\}=0\\
			\Rightarrow &\Big(\sum_{i=1}^k |x^*(T_ix)|^p\Big)^{1/p}=0, ~\forall~(x,x^*)\in \Pi_X\\
				\Rightarrow & |x^*(T_ix)|^p=0, ~\forall~(x,x^*)\in \Pi_X,~\forall ~1\leq i\leq k\\
				\Rightarrow &w(T_i)=0,~\forall ~1\leq i\leq k\\
				\Rightarrow & T_i=O, ~\forall ~1\leq i\leq k,~~~(\text{since~}w(\cdot)~\text{is~a~norm})\\
				\Rightarrow & \mathcal{T}=(O,O,\ldots,O).
		\end{align*}
	Now, let $a$ be a scalar. Then form $\Big(\sum_{i=1}^k |x^*(aT_ix)|^p\Big)^{1/p}=|a|\Big(\sum_{i=1}^k |x^*(T_ix)|^p\Big)^{1/p},$ it follows that $w_p(a\mathcal{T})=|a|w_p(\mathcal{T}).$ Next, let  $\mathcal{T}_j=(T_{1j},T_{2j},\ldots,T_{kj})\in L(X)^k,$ for $j=1,2,$ Using Minkowski's inequality, we get,
	\begin{align*}
	\Big(\sum_{i=1}^k |x^*(T_{i1}x)+x^*(T_{i2}x)|^p\Big)^{1/p}\leq \Big(\sum_{i=1}^k |x^*(T_{i1}x)|^p\Big)^{1/p}+\Big(\sum_{i=1}^k |x^*(T_{i2}x)|^p\Big)^{1/p}	.	
		\end{align*}
	In the above inequality, taking supremum over all $(x,x^*)\in \Pi_X,$ we get $w_p(\mathcal{T}_1+\mathcal{T}_2)\leq w_p(\mathcal{T}_1)+w_p(\mathcal{T}_2).$ Hence, $w_p(\cdot)$ defines a norm on $L(X)^k.$\\
	Conversely, suppose that $w_p(\cdot)$ defines a norm on $L(X)^k.$ If possible, suppose $w(\cdot)$ is not a norm on $L(X).$ Then there exists a non-zero operator $T\in L(X)$ such that $w(T)=0.$ Consider $\mathcal{T}=(T,O,\ldots,O)\in L(X)^k.$ Observe that, $w_p(\mathcal{T})=w(T)=0,$ whereas $\mathcal{T}$ is non-zero. This is a contradiction. Therefore, $w(\cdot)$ is a norm on $L(X).$ This completes the proof.
	\end{proof}
	
	Recall from \cite[Lem. 2.5]{MO} that for $T\in L(X),$ $$w(T)=\sup\{|x^{**}(T^*x^*)|: x^*\in E_{X^*},x^{**}\in E_{X^{**}},x^{**}(x^*)=1 \}.$$
	In the following theorem, we obtain an analogous result for the $p$-th joint numerical radius of operators on a finite-dimensional Banach space.
	\begin{theorem}\label{th-extreme}
		Let $\dim(X)<\infty.$ Then for $\mathcal{T}\in L(X)^k,$ 
		\[w_p(\mathcal{T})=\max\Bigg\{\Big(\sum_{i=1}^k |x^*(T_ix)|^p\Big)^{1/p}:(x,x^*)\in G_X\Bigg\},\]
		where $G_X=\{(x,x^*)\in \Pi_X:x\in E_X,x^*\in E_{X^*}\}.$
	\end{theorem}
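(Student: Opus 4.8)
The plan is to prove the two inequalities separately. Since $G_X\subseteq\Pi_X$, the right-hand side is immediately at most $w_p(\mathcal{T})$, so the entire content lies in showing that a maximizing pair can be taken inside $G_X$. First I would record that, because $\dim(X)<\infty$, the spheres $S_X$ and $S_{X^*}$ are compact; hence $\Pi_X$ is a closed (and so compact) subset of $S_X\times S_{X^*}$, and the continuous function $f(x,x^*)=\sum_{i=1}^k|x^*(T_ix)|^p$ attains its supremum on $\Pi_X$ at some $(x_0,x_0^*)$. Thus $w_p(\mathcal{T})^p=f(x_0,x_0^*)$, and it remains to relocate this maximizer to $G_X$.

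The key observation is that $f$ is separately convex in each variable and that the relevant constraint sets are faces. Fixing $x_0^*$ (which has norm $1$), consider $F=\{x\in B_X:x_0^*(x)=1\}$. For $x\in B_X$ one has $\operatorname{Re}x_0^*(x)\le|x_0^*(x)|\le1$, and $x_0^*(x)=1$ is equivalent to $\operatorname{Re}x_0^*(x)=1$; hence $F$ is exactly the exposed face of $B_X$ on which the real-linear functional $\operatorname{Re}x_0^*$ attains its maximum. Therefore $F$ is a nonempty (it contains $x_0$) compact convex set, and every extreme point of $F$ is an extreme point of $B_X$. Since each map $x\mapsto x_0^*(T_ix)$ is linear and $t\mapsto t^p$ is convex and increasing for $p\ge1$, the function $x\mapsto f(x,x_0^*)$ is convex; by Bauer's maximum principle (equivalently, Krein--Milman together with convexity) its maximum over $F$ is attained at an extreme point $x_1$ of $F$. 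Then $x_1\in E_X$, and as extreme points of $B_X$ lie on $S_X$ we have $x_1\in S_X$ with $x_0^*(x_1)=1$, so $(x_1,x_0^*)\in\Pi_X$. Because $x_0\in F$, we get $f(x_1,x_0^*)\ge f(x_0,x_0^*)=w_p(\mathcal{T})^p$, forcing equality.

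Next I would run the symmetric argument in the dual variable. With $x_1\in E_X\subseteq S_X$ now fixed, the set $F^*=\{x^*\in B_{X^*}:x^*(x_1)=1\}$ is, by the same reasoning applied to the norm-one element $x_1$, an exposed face of $B_{X^*}$ whose extreme points lie in $E_{X^*}$, and it is nonempty since $x_0^*\in F^*$. The map $x^*\mapsto f(x_1,x^*)$ is again convex, so its maximum over $F^*$ is attained at some $x_1^*\in E_{X^*}$ with $x_1^*(x_1)=1$; hence $(x_1,x_1^*)\in G_X$ and $f(x_1,x_1^*)\ge f(x_1,x_0^*)=w_p(\mathcal{T})^p$. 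Since $G_X\subseteq\Pi_X$ gives the reverse inequality, I conclude $f(x_1,x_1^*)=w_p(\mathcal{T})^p$; in particular the maximum over $G_X$ is attained and equals $w_p(\mathcal{T})$, which is the assertion.

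I expect the main obstacle to be this structural step rather than any computation: one must correctly identify each constraint slice as a face of the unit ball whose extreme points are genuine extreme points of the ball, and pair this with the separate convexity of $f$. The complex-scalar case needs the small but essential remark that $x^*(x)=1$ (with $\|x\|,\|x^*\|\le1$) is equivalent to the real-linear exposure condition $\operatorname{Re}x^*(x)=1$, which is what makes the face description---and hence Bauer's principle---applicable in both the real and complex settings. Attainment of the supremum and the invocation of Bauer's maximum principle are then routine consequences of finite-dimensionality.
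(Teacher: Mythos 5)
Your proof is correct, and it follows the same two-stage strategy as the paper: compactness of $\Pi_X$ in finite dimensions yields a maximizer $(x_0,x_0^*)$, which is then relocated to $E_X\times E_{X^*}$ one coordinate at a time using convexity of $t\mapsto t^p$. The difference is in how the relocation step is packaged. The paper argues by hand: it writes $x_0$ as a finite convex combination $\sum_i\lambda_i x_i$ of extreme points of $B_X$, runs the inequality chain $|x_0^*(T_jx_0)|^p\le\sum_i\lambda_i|x_0^*(T_jx_i)|^p$ to force every $x_i$ in the decomposition to attain the maximum, and separately deduces from $1=x_0^*(x_0)=\sum_i\lambda_i x_0^*(x_i)$ that each $x_0^*(x_i)=1$, so that $(x_i,x_0^*)\in\Pi_X$; it then repeats the argument for $x_0^*$. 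You instead observe that the constraint set $\{x\in B_X:x_0^*(x)=1\}$ is an exposed face of $B_X$ (your remark that $x^*(x)=1$ is equivalent to $\mathrm{Re}\,x^*(x)=1$ under the norm constraints is exactly the right way to make this work over complex scalars), that extreme points of a face are extreme points of the ball, and then invoke Bauer's maximum principle for the convex function $x\mapsto\sum_{i=1}^k|x_0^*(T_ix)|^p$. These are the same mathematics organized differently: the paper's inequality chain together with its verification that $x_0^*(x_i)=1$ is precisely an inline proof of Bauer's principle on that face. Your version is shorter and more conceptual, isolating the structural reason the theorem holds (faces plus separate convexity), at the cost of quoting a named principle; the paper's version is fully self-contained and gives the marginally stronger (though unneeded) conclusion that every extreme point appearing in a decomposition of the maximizer attains the maximum. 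The treatment of the dual variable is identical in both.
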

\begin{proof}
Since $\dim(X)<\infty,$ $S_X,S_{X^*}$ are compact. Hence $S_X\times S_{X^*}$ is compact.  Observe that, $\Pi_X$ is a closed subset of $S_X\times S_{X^*}.$ Indeed, if $\{(x_n,x_n^*)\}$ is a sequence of $\Pi_X$ converging to $(x,x^*),$ then $x_n\to x$ and $x_n^*\to x^*.$ Clearly, $\|x\|=1,\|x^*\|=1.$ Thus,
\begin{align*}
	|1-x^*(x)|&=|x_n^*(x_n)-x^*(x)|\\
	                &\leq |x_n^*(x_n)-x_n^*(x)|+|x_n^*(x)-x^*(x)|\\
                    &\leq \|x_n^*\|\|x_n-x\|+\|x_n^*-x^*\|\|x\|		\\
                    &\to 0,
\end{align*} 
from which it follows that $x^*(x)=1,$ and so $(x,x^*)\in \Pi_X.$ This proves that $\Pi_X$ is a compact subset of $S_X\times S_{X^*}.$ Observe that, the mapping $\phi:X\times X^*\to F$ defined by $\phi(x,x^*)=\Big(\sum_{i=1}^k |x^*(T_ix)|^p\Big)^{1/p}$ is continuous. Therefore, the set $\phi(\Pi_X)=\Bigg\{\Big(\sum_{i=1}^k |x^*(T_ix)|^p\Big)^{1/p}:(x,x^*)\in \Pi_X\Bigg\}$ is compact. Hence, there exists $(x_0,x_0^*)\in \Pi_X$ such that $\phi(x_0,x_0^*)=\sup\phi(\Pi_X).$ Thus, $w_p(\mathcal{T})=\Big(\sum_{i=1}^k |x_0^*(T_ix_0)|^p\Big)^{1/p}.$ Suppose that $x_0\notin E_X.$ Then there exist $\lambda_i\in (0,1)$ and $x_i\in E_X$ for $1\leq i\leq n$ such that $\sum_{i=1}^n\lambda_i=1$ and $x_0=\sum_{i=1}^n\lambda_ix_i.$ Now,
\begin{align*}
T_jx_0&=	\sum_{i=1}^n\lambda_iT_jx_i~\forall ~1\leq j\leq k\\
\Rightarrow |x_0^*(T_jx_0)|   &= |\sum_{i=1}^n\lambda_ix_0^*(T_jx_i)|~\forall ~1\leq j\leq k\\
	                                              &\leq \sum_{i=1}^n\lambda_i|x_0^*(T_jx_i)|~\forall ~1\leq j\leq k\\
	  \Rightarrow |x_0^*(T_jx_0)| ^p&\leq \bigg(\sum_{i=1}^n\lambda_i|x_0^*(T_jx_i)|\bigg)^p~\forall ~1\leq j\leq k\\
	  &\leq \sum_{i=1}^n\lambda_i |x_0^*(T_jx_i)|^p ~\forall ~1\leq j\leq k\\
	  \Rightarrow \sum_{j=1}^k |x_0^*(T_jx_0)| ^p&\leq \sum_{i=1}^n\lambda_i \bigg(\sum_{j=1}^k|x_0^*(T_jx_i)|^p\bigg)\\
	  &\leq \sum_{i=1}^n\lambda_i w_p(\mathcal{T})^p=w_p(\mathcal{T})^p.
\end{align*}
Thus, from
\[w_p(\mathcal{T})^p=\sum_{j=1}^k |x_0^*(T_jx_0)| ^p\leq \sum_{i=1}^n\lambda_i \bigg(\sum_{j=1}^k|x_0^*(T_jx_i)|^p\bigg)\leq w_p(\mathcal{T})^p,\]
it follows that for all $1\leq i\leq n,$ $w_p(\mathcal{T})^p=\bigg(\sum_{j=1}^k|x_0^*(T_jx_i)|^p\bigg).$ Moreover, from 
\[1=x_0^*(x_0)=\sum_{i=1}^n\lambda_ix_0^*(x_i)=|\sum_{i=1}^n\lambda_ix_0^*(x_i)|\leq \sum_{i=1}^n\lambda_i|x_0^*(x_i)|\leq \sum_{i=1}^n\lambda_i=1,\]
we get $x_0^*(x_i)=1$ for all $1\leq i\leq n, $ i.e., $(x_i,x_0^*)\in \Pi_X.$ Now, suppose that $x_0^*\notin E_{X^*}.$ Then there exist $\mu_i\in(0,1)$ and $x_i^*\in E_{X^*}$ for $1\leq i\leq m,$ such that $\sum_{i=1}^m\mu_i=1$ and $x_0^*=\sum_{i=1}^m\mu_ix_i^*.$ Following similar arguments, we can show that for all $1\leq i\leq n,1\leq r\leq m,$ $w_p(\mathcal{T})=\bigg(\sum_{j=1}^k|x_r^*(T_jx_i)|^p\bigg)^{1/p},$ where $(x_i,x_r^*)\in \Pi_X.$ Moreover, since $x_i\in E_X,x_r^*\in E_{X^*},$ we have $(x_i,x_r^*)\in G_X.$ This completes the proof.
\end{proof}
We can obtain an analogous result of Theorem \ref{th-extreme} for $\|\mathcal{T}\|_p,$ where $\mathcal{T}\in L(X,Y)^k.$ To avoid monotonicity, we simply state the theorem.

\begin{theorem}\label{th-extremenorm}
Let $\dim(X)<\infty.$ Then for $\mathcal{T}\in L(X,Y)^k,$ 
\[\|\mathcal{T}\|_p=\max\Bigg\{\Big(\sum_{i=1}^k \|T_ix\|^p\Big)^{1/p}:x\in E_X\Bigg\}.\]
\end{theorem}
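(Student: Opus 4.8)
The plan is to follow the template of the proof of Theorem \ref{th-extreme}, specialized to the joint operator norm and considerably simplified by the absence of any dual-space condition. First I would observe that since $\dim(X)<\infty$, the unit sphere $S_X$ is compact, and the map $\psi\colon X\to\mathbb{R}$ given by $\psi(x)=\big(\sum_{i=1}^k\|T_ix\|^p\big)^{1/p}$ is continuous, as a composition of the continuous operators $T_i$, the norm, and the $\ell_p$-aggregation. Consequently $\psi(S_X)$ is compact, the supremum defining $\|\mathcal{T}\|_p$ is attained, and there is some $x_0\in S_X$ with $\|\mathcal{T}\|_p=\big(\sum_{i=1}^k\|T_ix_0\|^p\big)^{1/p}$. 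This attainment already justifies writing $\max$ in place of $\sup$.

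If $x_0\in E_X$ we are done, so suppose $x_0\notin E_X$. In finite dimensions every point of $B_X$ is a finite convex combination of extreme points, so I would write $x_0=\sum_{i=1}^n\lambda_ix_i$ with $x_i\in E_X$, $\lambda_i\in(0,1)$, and $\sum_{i=1}^n\lambda_i=1$. The key estimate chains the triangle inequality with the convexity of $t\mapsto t^p$ on $[0,\infty)$: for each $1\le j\le k$,
\[
\|T_jx_0\|^p\le\Big(\sum_{i=1}^n\lambda_i\|T_jx_i\|\Big)^p\le\sum_{i=1}^n\lambda_i\|T_jx_i\|^p.
\]
Summing over $j$ and using $\big(\sum_{j=1}^k\|T_jx_i\|^p\big)\le\|\mathcal{T}\|_p^p$, which holds since each extreme point satisfies $x_i\in S_X$, yields
\[
\|\mathcal{T}\|_p^p=\sum_{j=1}^k\|T_jx_0\|^p\le\sum_{i=1}^n\lambda_i\Big(\sum_{j=1}^k\|T_jx_i\|^p\Big)\le\sum_{i=1}^n\lambda_i\|\mathcal{T}\|_p^p=\|\mathcal{T}\|_p^p.
\]

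Since the two ends of this chain coincide, every inequality is forced to be an equality; in particular $\sum_{j=1}^k\|T_jx_i\|^p=\|\mathcal{T}\|_p^p$ for each $i$, so the maximum is already attained at an extreme point $x_i\in E_X$. Together with the trivial inclusion $E_X\subseteq S_X$, which gives the reverse inequality, this establishes the claimed formula. I do not expect a genuine obstacle here, since the argument is a strict simplification of that of Theorem \ref{th-extreme}: the only place the hypothesis $\dim(X)<\infty$ is essential is in compactness (to attain the supremum) and in representing $x_0$ as a convex combination of finitely many extreme points.
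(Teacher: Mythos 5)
Your proposal is correct and is essentially the proof the paper intends: the paper states Theorem \ref{th-extremenorm} without proof, noting it is analogous to Theorem \ref{th-extreme}, and your argument is exactly that analogue (compactness of $S_X$ to attain the supremum, decomposition of $x_0$ into a finite convex combination of extreme points, the triangle inequality plus convexity of $t\mapsto t^p$, and the forced-equality argument), simplified by the absence of the dual functional.
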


For $\mathcal{T}=(T_1,\ldots,T_k)\in L(X,Y)^k,$ suppose $\mathcal{T}^*=(T_1^*,\ldots,T_k^*)\in L(Y^*,X^*)^k.$ In the next example, we exhibit a basic difference between the operator norm and the $p$-th joint operator norm. More precisely, we show that  in general, for $k>1,$ $\|\mathcal{T}\|_p\neq \|\mathcal{T}^*\|_p.$
\begin{example}
	Suppose $m>2.$ Choose $1<k\leq m.$ Consider the operators $T_i\in L(\ell_\infty^m)$ for $1\leq i\leq k$ as follows:
	\[T_ie_i=e_i, ~T_ie_j=\theta,~\text{for}~j\in \{1,\ldots,m\}\setminus \{i\},\] where $\{e_i:1\leq i\leq m\}$ is the standard ordered basis of $\ell_\infty^m.$ Suppose $\mathcal{T}=(T_1,\ldots,T_k)\in L(\ell_\infty^m)^k.$ Then $\|\mathcal{T}\|_p=k^{1/p}$ (see Theorem \ref{th-l1} (i) for details).\\
	Now, observe that, $\mathcal{T}^*=(T_1,\ldots,T_k)\in L(\ell_1^m)^k,$ since $T_i^*=T_i$ for all $1\leq i\leq k.$ Therefore, using Theorem \ref{th-extremenorm}, we get
	\[\|\mathcal{T}^*\|_p=\sup\Big\{\big(\sum_{i=1}^k\|T_ie_j\|^p\big)^{1/p}:1\leq j\leq m\Big\}=1\neq \|\mathcal{T}\|_p.\]
\end{example}
However, if $\mathcal{T}\in L(X)^k,$ where $X$ is a reflexive Banach space, then $w_p(\mathcal{T})=w_p(\mathcal{T^*}).$
\begin{prop}
For $\mathcal{T}=(T_1,\ldots,T_k)\in L(X)^k,$ $w_p(\mathcal{T})\leq w_p(\mathcal{T}^*).$ In particular, if $X$ is reflexive, then $w_p(\mathcal{T})=w_p(\mathcal{T}^*).$ 
\end{prop}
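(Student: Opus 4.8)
The plan is to compare the defining suprema for $w_p(\mathcal{T})$ and $w_p(\mathcal{T}^*)$ directly. Recall that
\[
w_p(\mathcal{T})=\sup\Bigl\{\bigl(\textstyle\sum_{i=1}^k|x^*(T_ix)|^p\bigr)^{1/p}:(x,x^*)\in\Pi_X\Bigr\},
\]
while $\mathcal{T}^*=(T_1^*,\ldots,T_k^*)\in L(X^*)^k$, so that
\[
w_p(\mathcal{T}^*)=\sup\Bigl\{\bigl(\textstyle\sum_{i=1}^k|x^{**}(T_i^*x^*)|^p\bigr)^{1/p}:(x^*,x^{**})\in\Pi_{X^*}\Bigr\}.
\]
The first step is to realize each summand of $w_p(\mathcal{T})$ inside the expression for $w_p(\mathcal{T}^*)$. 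Given $(x,x^*)\in\Pi_X$, I would use the canonical embedding $\hat{x}\in X^{**}$ and observe that $T_i^*x^*(x)=x^*(T_ix)$, so that $\hat{x}(T_i^*x^*)=x^*(T_ix)$ for every $i$. This identity is the crux: it converts a Banach-space numerical-radius term into a bidual numerical-radius term using the same pair of functionals.

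The second step is to verify that the pair $(x^*,\hat{x})$ is an admissible competitor for the supremum defining $w_p(\mathcal{T}^*)$, i.e.\ that $(x^*,\hat{x})\in\Pi_{X^*}$. This needs $\|x^*\|=1$ (true since $x^*\in S_{X^*}$), $\|\hat{x}\|=1$ (true since the embedding is isometric and $x\in S_X$), and $\hat{x}(x^*)=x^*(x)=1$ (true since $(x,x^*)\in\Pi_X$). Hence $(x^*,\hat{x})\in\Pi_{X^*}$, and therefore
\[
\bigl(\textstyle\sum_{i=1}^k|x^*(T_ix)|^p\bigr)^{1/p}=\bigl(\textstyle\sum_{i=1}^k|\hat{x}(T_i^*x^*)|^p\bigr)^{1/p}\leq w_p(\mathcal{T}^*).
\]
Taking the supremum over all $(x,x^*)\in\Pi_X$ yields $w_p(\mathcal{T})\leq w_p(\mathcal{T}^*)$, proving the first assertion.

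For the second assertion, when $X$ is reflexive I would apply the inequality just proved to the tuple $\mathcal{T}^*\in L(X^*)^k$ in place of $\mathcal{T}$, obtaining $w_p(\mathcal{T}^*)\leq w_p(\mathcal{T}^{**})$. Reflexivity identifies $X^{**}$ isometrically with $X$ (and $X^{***}$ with $X^*$) via the canonical embedding, under which $\mathcal{T}^{**}$ corresponds to $\mathcal{T}$ and $\Pi_{X^{**}}$ corresponds to $\Pi_X$; hence $w_p(\mathcal{T}^{**})=w_p(\mathcal{T})$, giving the reverse inequality $w_p(\mathcal{T}^*)\leq w_p(\mathcal{T})$ and thus equality.

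The main obstacle is largely bookkeeping rather than a genuine difficulty: one must be careful that the admissible competitor sets $\Pi_X$ and $\Pi_{X^*}$ are matched correctly under duality, and in the reflexive case that the double-dual identifications are spelled out precisely enough that $w_p(\mathcal{T}^{**})=w_p(\mathcal{T})$ is justified and not merely asserted. No convexity, compactness, or finite-dimensionality is needed, so Theorem \ref{th-extreme} does not enter; the argument is a clean transpose-and-embed computation.
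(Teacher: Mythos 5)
Your proof of the inequality $w_p(\mathcal{T})\leq w_p(\mathcal{T}^*)$ is exactly the paper's argument: embed via the canonical map $Q:X\to X^{**}$, check that $(x^*,Q(x))\in\Pi_{X^*}$ whenever $(x,x^*)\in\Pi_X$, and use the identity $Q(x)(T_i^*x^*)=x^*(T_ix)$. Where you diverge is the reflexive case. The paper stays inside the chain of inequalities it has already written: reflexivity gives $Q(X)=X^{**}$, so every competitor $(x^*,x^{**})\in\Pi_{X^*}$ is of the form $(x^*,Q(x))$ with $(x,x^*)\in\Pi_X$ (since $\|x\|=\|x^{**}\|=1$ and $x^*(x)=Q(x)(x^*)=x^{**}(x^*)=1$), hence the single inequality in the chain is an equality and $w_p(\mathcal{T})=w_p(\mathcal{T}^*)$ follows at once. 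You instead bootstrap: apply the first inequality to $\mathcal{T}^*\in L(X^*)^k$ to get $w_p(\mathcal{T}^*)\leq w_p(\mathcal{T}^{**})$, then identify $w_p(\mathcal{T}^{**})$ with $w_p(\mathcal{T})$. That route is correct, but it relocates the real content into the identification step, which you flag as bookkeeping without carrying it out: one must verify $T_i^{**}\circ Q=Q\circ T_i$, that reflexivity of $X$ makes the canonical map $X^*\to X^{***}$ surjective, and that under these identifications $\Pi_{X^{**}}$ corresponds bijectively to $\Pi_X$ with matching values $x^{***}(T_i^{**}x^{**})=x^*(T_ix)$. The paper's variant avoids all of this; yours has the mild virtue of exhibiting the equality as a two-sided application of one lemma, but it is the longer road to the same place.
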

\begin{proof}
	Suppose $Q:X\to X^{**}$ is the natural isometric embedding of $X$ into $X^{**}$. Observe that, if $(x,x^*)\in \Pi_X,$ then $Q(x)\in S_{X^{**}}$ and $Q(x)(x^*)=x^*(x)=1.$ Therefore, $(x^*,Q(x))\in \Pi_{X^*}.$ Now,
	\begin{align*}
		w_p(\mathcal{T})&=\sup\Bigg\{\Big(\sum_{i=1}^k |x^*(T_ix)|^p\Big)^{1/p}:(x,x^*)\in \Pi_X\Bigg\}\\
		                              &=\sup\Bigg\{\Big(\sum_{i=1}^k |Q(x)(T_i^*x^*)|^p\Big)^{1/p}:(x^*,Q(x))\in \Pi_{X^{*}}\Bigg\}\\
		                              &\leq \sup\Bigg\{\Big(\sum_{i=1}^k |x^{**}(T_i^*x^*)|^p\Big)^{1/p}:(x^*,x^{**})\in \Pi_{X^{*}}\Bigg\}\\
		                              &=	w_p(\mathcal{T}^*).
	\end{align*}
In particular, if $X$ is reflexive, then since $Q(X)=X^{**},$ the above inequality is an equality. Therefore, in this case 	$w_p(\mathcal{T})=	w_p(\mathcal{T}^*).$ This ends the proof.
\end{proof}

\section{The $(p,k)$-th joint numerical index}
Let us  begin this section with a bound of the $(p,k)$-th joint numerical index of a Banach space. We follow the idea of \cite[Th. 9]{DM} in the proof.  
\begin{theorem}\label{th-boundnpkx}
	For all $1\leq p<\infty$ and $k\in \mathbb{N},$ the following is true.
\[\frac{n(X)}{k^{1/p}}\leq n_{(p,k)}(X)\leq n(X).\]	
\end{theorem}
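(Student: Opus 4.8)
The plan is to prove the two inequalities separately, relying throughout on two standard facts: that $w(T)\le\|T\|$, and that, directly from the definition of $n(X)$ as an infimum together with the homogeneity of $w(\cdot)$, one has $w(T)\ge n(X)\|T\|$ for every $T\in L(X)$ (for $T\neq O$, apply the definition to $T/\|T\|$; for $T=O$ it is trivial).

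For the upper bound $n_{(p,k)}(X)\le n(X)$, I would restrict the infimum defining $n_{(p,k)}(X)$ to the special tuples $\mathcal{T}=(T,O,\ldots,O)$ with $T\in L(X)$. For such a tuple only one summand survives, so $\|\mathcal{T}\|_p=\sup\{\|Tx\|:x\in S_X\}=\|T\|$ and likewise $w_p(\mathcal{T})=\sup\{|x^*(Tx)|:(x,x^*)\in\Pi_X\}=w(T)$. Hence taking the infimum over all such $T$ with $\|T\|=1$ (a subcollection of the competitors with $\|\mathcal{T}\|_p=1$) already yields $n(X)$; since enlarging the feasible set can only decrease the infimum, $n_{(p,k)}(X)\le n(X)$.

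For the lower bound, fix an arbitrary $\mathcal{T}=(T_1,\ldots,T_k)$ with $\|\mathcal{T}\|_p=1$. First, for each fixed index $j$ and each $(x,x^*)\in\Pi_X$ one has $\big(\sum_{i=1}^k|x^*(T_ix)|^p\big)^{1/p}\ge|x^*(T_jx)|$; taking the supremum over $\Pi_X$ gives $w_p(\mathcal{T})\ge w(T_j)\ge n(X)\|T_j\|$ for every $j$, whence $w_p(\mathcal{T})\ge n(X)\max_{1\le j\le k}\|T_j\|$. It then remains to bound $\max_j\|T_j\|$ from below. Using the elementary inequality $\big(\sum_{i=1}^k a_i^p\big)^{1/p}\le k^{1/p}\max_i a_i$ for nonnegative reals, applied to $a_i=\|T_ix\|$ and followed by taking the supremum over $x\in S_X$, I obtain $1=\|\mathcal{T}\|_p\le k^{1/p}\max_{1\le j\le k}\|T_j\|$, so that $\max_j\|T_j\|\ge k^{-1/p}$. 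Combining the two estimates yields $w_p(\mathcal{T})\ge n(X)\,k^{-1/p}$, and taking the infimum over all $\mathcal{T}$ with $\|\mathcal{T}\|_p=1$ completes the lower bound.

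None of the steps is genuinely difficult; the proof is essentially bookkeeping with $\ell_p$-norm inequalities. The one place to be careful is the chain bounding $\max_j\|T_j\|$: the pointwise estimate only produces $\sup_{x}\max_i\|T_ix\|$, and one must note that this is dominated by $\max_i\sup_x\|T_ix\|=\max_i\|T_i\|$ before invoking $\|\mathcal{T}\|_p=1$. Provided the suprema are interchanged in the inequality-preserving direction, the argument closes cleanly.
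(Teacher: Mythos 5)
Your proof is correct and takes essentially the same route as the paper: the upper bound via the tuples $(T,O,\ldots,O)$ is identical, and your lower bound rests on the same chain $w_p(\mathcal{T})\ge w(T_j)\ge n(X)\|T_j\|$ combined with the observation that some coordinate operator must satisfy $\|T_j\|\ge k^{-1/p}$. Your direct comparison $1=\|\mathcal{T}\|_p\le k^{1/p}\max_j\|T_j\|$ is a mild streamlining of the paper's argument (which picks a near-maximizing $x\in S_X$, pigeonholes to find $j_0$ with $\|T_{j_0}x\|^p>(1-\epsilon)^p/k$, and lets $\epsilon\to 0$), and since you use no strict inequalities it also dispenses with the paper's separate treatment of the case $n(X)=0$.
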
	
	\begin{proof}
		To prove $ n_{(p,k)}(X)\leq n(X),$ consider an arbitrary non-zero operator $T\in L(X).$ Let $\mathcal{T}=(T,O,\ldots,O)\in L(X)^k.$ Then observe that, $\|\mathcal{T}\|_p=\|T\|$ and $w_p(\mathcal{T})=w(T).$ Thus, $n_{(p,k)}(X)\leq \frac{1}{\|\mathcal{T}\|_p}w_p(\mathcal{T})=\frac{1}{\|T\|}w(T).$   Since this is true for each non-zero operator $T\in L(X),$ we get 
		\begin{equation}\label{eq-b01}
			n_{(p,k)}(X)\leq n(X).
			\end{equation}
		Observe that, if $n(X)=0,$ then from (\ref{eq-b01}), we get the desired result. Therefore, 
		to prove the first inequality, without loss of generality, we may assume that $n(X)>0.$ Now, consider $\mathcal{T}=(T_1,T_2,\ldots,T_k)\in L(X)^k$ such that $\|\mathcal{T}\|_p=1.$ Choose $\epsilon>0.$ Then there exists $x\in S_{X}$ such that $(\sum_{i=1}^k\|T_ix\|^p)^{1/p}>1-\epsilon.$ Clearly, there exists $j_0\in \{1,2,\ldots,k\}$ such that $\|T_{j_0}x\|^p>\frac{(1-\epsilon)^p}{k}.$ Thus,
		\[w(T_{j_0})\geq n(X)\|T_{j_0}\|\geq n(X)\|T_{j_0}x\|>\frac{(1-\epsilon)}{k^{1/p}}n(X).\]
		Now,
		\begin{align*}
			w_p(\mathcal{T})&\geq \Big(\sum_{i=1}^k |x^*(T_ix)|^p\Big)^{1/p}, ~\forall~(x,x^*)\in \Pi_X\\
			& \geq |x^*(T_{j_0}x)|, ~\forall~(x,x^*)\in \Pi_X\\
			\Rightarrow w_p(\mathcal{T})&\geq w(T_{j_0})>\frac{(1-\epsilon)}{k^{1/p}}n(X).
		\end{align*}
	Since the above inequality is true for each $\epsilon>0,$ we have $w_p(\mathcal{T})\geq \frac{n(X)}{k^{1/p}}.$
	Thus, $n_{{p,k}}(X)\geq \frac{n(X)}{k^{1/p}}.$
		This completes the proof.
	\end{proof}
Note that, if $n(X)>0,$ then for each non-zero $T\in L(X), $ $w(T)>0.$ However, from \cite[Ex. 3b]{MP}, it follows that the converse is not true. In particular, there is a Banach space $X,$ for which $n(X)=0$ but $w(T)>0$ for all non-zero operator $T.$ Using this fact and the last theorem, we immediately get an analogous result for the $(p,k)$-th joint numerical index of a Banach space.
\begin{cor}
There exists a Banach space $X$ such that $n_{(p,k)}(X)=0,$ whereas $w_p(\mathcal{T})\neq 0$ for all non-zero $\mathcal{T}\in L(X)^k.$ 	
\end{cor}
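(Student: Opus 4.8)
The plan is to leverage the two facts the author has just assembled: Theorem~\ref{th-boundnpkx}, which gives $\frac{n(X)}{k^{1/p}}\leq n_{(p,k)}(X)\leq n(X)$, and the existence (cited from \cite{MP}) of a Banach space $X$ with $n(X)=0$ but $w(T)>0$ for every non-zero $T\in L(X)$. First I would fix such a space $X$ and verify the two required conclusions separately.

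For the first conclusion, $n_{(p,k)}(X)=0$, I would simply sandwich: since $n(X)=0$, the upper bound in Theorem~\ref{th-boundnpkx} forces $n_{(p,k)}(X)\leq n(X)=0$, and since $n_{(p,k)}(X)\geq 0$ by definition (it is an infimum of the nonnegative quantities $w_p(\mathcal{T})$), we conclude $n_{(p,k)}(X)=0$. This step is immediate.

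For the second conclusion, I need $w_p(\mathcal{T})>0$ for every non-zero $\mathcal{T}=(T_1,\ldots,T_k)\in L(X)^k$. The key observation is that if $\mathcal{T}$ is non-zero, then at least one coordinate, say $T_{j_0}$, is a non-zero operator in $L(X)$. By the defining property of our chosen space, $w(T_{j_0})>0$. Then I would compare $w_p(\mathcal{T})$ with $w(T_{j_0})$: since for every $(x,x^*)\in\Pi_X$ we have $\big(\sum_{i=1}^k|x^*(T_ix)|^p\big)^{1/p}\geq |x^*(T_{j_0}x)|$, taking the supremum over $\Pi_X$ gives $w_p(\mathcal{T})\geq w(T_{j_0})>0$. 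This is the same coordinate-domination inequality already used in the proof of Theorem~\ref{th-boundnpkx}, so no new estimate is needed.

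I do not expect any genuine obstacle here, since the corollary is essentially a packaging of the preceding theorem together with the cited example; the only point requiring a little care is making sure the chosen $X$ genuinely satisfies \emph{both} $n(X)=0$ and ``$w(T)>0$ for all non-zero $T$'' simultaneously, which is exactly what \cite[Ex. 3b]{MP} provides. Everything else is a one-line sandwich argument and a one-line coordinate bound.
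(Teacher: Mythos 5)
Your proposal is correct and follows essentially the same route as the paper: both invoke the space from \cite[Ex. 3b]{MP}, both get $n_{(p,k)}(X)=0$ by sandwiching with Theorem \ref{th-boundnpkx} (the paper uses its lower bound $n(X)/k^{1/p}$ where you use nonnegativity of the infimum, a trivial difference), and both get $w_p(\mathcal{T})>0$ from the coordinate-domination inequality $w_p(\mathcal{T})\geq w(T_{i_0})$ applied to a non-zero coordinate. No gaps; nothing to change.
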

\begin{proof}
Using \cite[Ex. 3b]{MP}, we can choose a Banach space $X$ such that $n(X)=0,$ whereas $w(T)\neq 0$ for all non-zero $T\in L(X).$ Now, suppose $\mathcal{T}=(T_1,\ldots,T_k)\in L(X)^k$ and $\mathcal{T}$ is non-zero. Then $T_{i_0}$ is non-zero for some $1\leq i_0\leq k.$ Therefore, $w_p(\mathcal{T})\geq w(T_{i_0})>0.$ Whereas, from Theorem \ref{th-boundnpkx}, we get 
$$0=\frac{n(X)}{k^{1/p}}\leq n_{(p,k)}(X)\leq n(X)=0,$$ which implies that $n_{(p,k)}(X)=0.$ 	
\end{proof}
Our next goal is to study the $(p,k)$-th joint numerical index of direct sum of Banach spaces. Suppose $\{X_\lambda:\lambda\in \Lambda\}$ is a family of Banach spaces. By the symbol $[\oplus_{\lambda\in \Lambda} X_{\lambda}]_{\ell_q},$ where $1\leq q\leq \infty$  (resp. $[\oplus_{\lambda\in \Lambda} X_{\lambda}]_{c_0}$), we denote the $\ell_q$-sum (resp. $c_0$-sum) of the family. In what follows, we use the notation $J(x)$ to denote the set $\{x^*\in S_{X^*}:x^*(x)=\|x\|\}$ for all non-zero $x\in X.$
\begin{theorem}\label{th-directsum}
Let $\{X_\lambda:\lambda\in \Lambda\}$ be a family of Banach spaces. Suppose $X$ is either $[\oplus_{\lambda\in \Lambda} X_{\lambda}]_{\ell_q},$ where $1\leq q\leq \infty$ or  $[\oplus_{\lambda\in \Lambda} X_{\lambda}]_{c_0}.$ Then
\[n_{(p,k)}(X)\leq \inf\{n_{(p,k)}(X_{\lambda}):\lambda\in \Lambda\}.\] 	
\end{theorem}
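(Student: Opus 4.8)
The plan is to reduce everything to a single summand by a compression construction, so that every tuple on $X_{\lambda_0}$ lifts to a tuple on $X$ with the same $p$-th joint operator norm and a controlled $p$-th joint numerical radius. Fix $\lambda_0\in\Lambda$ and let $P_{\lambda_0}\colon X\to X_{\lambda_0}$ and $J_{\lambda_0}\colon X_{\lambda_0}\to X$ be the canonical norm-one projection and the canonical isometric inclusion. Given $\mathcal{S}=(S_1,\ldots,S_k)\in L(X_{\lambda_0})^k$ with $\|\mathcal{S}\|_p=1$, I would set $T_i=J_{\lambda_0}S_iP_{\lambda_0}\in L(X)$ and $\mathcal{T}=(T_1,\ldots,T_k)$. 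If I can establish that $\|\mathcal{T}\|_p=1$ and $w_p(\mathcal{T})\le w_p(\mathcal{S})$, then $\mathcal{T}$ is admissible in the infimum defining $n_{(p,k)}(X)$, whence $n_{(p,k)}(X)\le w_p(\mathcal{T})\le w_p(\mathcal{S})$; taking the infimum over all such $\mathcal{S}$ gives $n_{(p,k)}(X)\le n_{(p,k)}(X_{\lambda_0})$, and then the infimum over $\lambda_0\in\Lambda$ yields the claim.

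The structural device I would exploit is that every one of the sums in the hypothesis splits as a \emph{two-fold} sum by isolating the $\lambda_0$-coordinate. Writing $Z$ for the sum of the remaining family $\{X_\lambda:\lambda\neq\lambda_0\}$ (of the same type), one has isometrically $X=X_{\lambda_0}\oplus_r Z$, where $r=q$ for the $\ell_q$-sum and $r=\infty$ for the $c_0$-sum (and for $q=\infty$). The crucial point is that the dual then decomposes cleanly as $X^*=X_{\lambda_0}^*\oplus_{r'}Z^*$, where $r'$ is the conjugate exponent; this holds for a two-fold sum regardless of dimension, which is exactly what lets the argument cover the infinite-index and $\ell_\infty$ cases uniformly without invoking any delicate description of the full dual. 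Under this identification, for $x=(y,w)\in X$ one computes $T_i x=(S_i y,0)$, so $\|T_i x\|=\|S_i y\|$ and hence $\bigl(\sum_i\|T_i x\|^p\bigr)^{1/p}=\bigl(\sum_i\|S_i y\|^p\bigr)^{1/p}\le\|\mathcal{S}\|_p\,\|y\|\le\|\mathcal{S}\|_p$; testing on vectors of the form $(y,0)$ with $\|y\|$ near $1$ shows the supremum is attained in the limit, giving $\|\mathcal{T}\|_p=\|\mathcal{S}\|_p=1$.

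For the numerical radius I would take an arbitrary $(x,x^*)\in\Pi_X$, write $x=(y,w)$ and $x^*=(y^*,z^*)$ under the decomposition above, and observe that $x^*(T_i x)=y^*(S_i y)$. The normalization conditions read $\|x^*\|=\|y^*\|+\|z^*\|=1$ (in the $\oplus_{r'}$ sum, when $r'=1$) together with the analogous Hölder relation when $1<r'<\infty$, while $x^*(x)=y^*(y)+z^*(w)=1$. The key step is the equality analysis: since $y^*(y)\le\|y^*\|\|y\|$ and $z^*(w)\le\|z^*\|\|w\|$ with $\|y\|,\|w\|\le1$, attaining $x^*(x)=1$ forces $y^*(y)=\|y^*\|\|y\|$. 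Consequently, if $y^*\neq0$ I can set $u=y/\|y\|$ and $v^*=y^*/\|y^*\|$, check that $v^*(u)=1$ so that $(u,v^*)\in\Pi_{X_{\lambda_0}}$, and conclude $\bigl(\sum_i|y^*(S_i y)|^p\bigr)^{1/p}=\|y^*\|\|y\|\bigl(\sum_i|v^*(S_i u)|^p\bigr)^{1/p}\le w_p(\mathcal{S})$ (the case $y^*=0$ being trivial). Taking the supremum over $\Pi_X$ gives $w_p(\mathcal{T})\le w_p(\mathcal{S})$; the reverse inequality, obtained by lifting $(y,y^*)\in\Pi_{X_{\lambda_0}}$ to $\bigl((y,0),(y^*,0)\bigr)\in\Pi_X$, even yields equality, though only the upper bound is needed.

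The main obstacle I anticipate is precisely this numerical-radius step: unlike the operator-norm computation, it is not enough that the compressed pair $(y,y^*)$ has both norms at most one, because $w_p(\cdot)$ is only sensitive to pairs in $\Pi_{X_{\lambda_0}}$ and is in general far smaller than the joint operator norm. The argument therefore hinges on extracting, from the global duality condition $x^*(x)=1$, the \emph{alignment} $y^*(y)=\|y^*\|\|y\|$ in the $\lambda_0$-coordinate, which is exactly where the two-fold sum decomposition $X^*=X_{\lambda_0}^*\oplus_{r'}Z^*$ and the equality case of Hölder's inequality do the essential work; handling the boundary exponents $r'\in\{1,\infty\}$ and the possibility $y^*=0$ uniformly is the only care required.
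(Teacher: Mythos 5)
Your proposal is correct, and its skeleton coincides with the paper's proof: both isolate the $\lambda_0$-summand by writing $X$ as a two-fold sum of $X_{\lambda_0}$ with the sum $Z$ of the remaining spaces, lift a norm-one tuple $\mathcal{S}=(S_1,\ldots,S_k)$ on $X_{\lambda_0}$ to the compression tuple $T_i(y,w)=(S_iy,\theta)$, verify $\|\mathcal{T}\|_p=\|\mathcal{S}\|_p=1$ by the same two-sided estimate, and reduce $w_p(\mathcal{T})\leq w_p(\mathcal{S})$ to the alignment statement that any $((y,w),(y^*,z^*))\in\Pi_X$ with $y\neq\theta,$ $y^*\neq\theta$ satisfies $y^*(y)=\|y^*\|\,\|y\|.$ Where you genuinely differ is in how that alignment is established. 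The paper invokes an external description of $\Pi_Z$ for two-fold sums (\cite[Prop.~6.1]{CKS}), runs a three-case analysis on $\|y\|$ versus $\|v\|$ for $q=\infty,$ and dismisses $1\leq q<\infty$ with a ``similarly'' remark; you instead use only the standard dual decomposition of a \emph{two-fold} sum, $X^*=X_{\lambda_0}^*\oplus_{r'}Z^*,$ and the chain $1=y^*(y)+z^*(w)\leq\|y^*\|\|y\|+\|z^*\|\|w\|\leq\|(\|y\|,\|w\|)\|_r\,\|(\|y^*\|,\|z^*\|)\|_{r'}=1,$ whose equality case forces the alignment at once, uniformly in $r\in[1,\infty]$ and hence for the $\ell_q$-, $\ell_\infty$- and $c_0$-sums simultaneously. (For complex scalars one should insert real parts, $\mathrm{Re}\,y^*(y)\leq|y^*(y)|\leq\|y^*\|\|y\|,$ but that is cosmetic.) Your route buys self-containedness and a uniform treatment of all exponents, at no real cost, since the two-fold dual decomposition is elementary; the concluding steps---admissibility of $\mathcal{T}$ in the infimum defining $n_{(p,k)}(X),$ then infima over $\mathcal{S}$ and over $\lambda_0$---match the paper exactly.
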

\begin{proof}
	Suppose that $Z=[Y\oplus V]_{\ell_q}$ for the Banach spaces $Y,V,$ where $1\leq q\leq \infty.$ We first claim that $n_{(p,k)}(Z)\leq n_{(p,k)}(Y).$ Choose $\mathcal{T}_Y=(A_1,A_2,\ldots,A_k)\in L(Y)^k$ such that $\|\mathcal{T}_Y\|_p=1.$ For $1\leq i\leq k,$ define $T_i\in L(Z)$ by $T_i(y,v)=(A_i y,\theta),$ where $y\in Y,v\in V.$ Let $\mathcal{T}_Z=(T_1,T_2,\ldots,T_k)\in L(Z)^k.$ Observe that,
	\begin{align*}
		\|\mathcal{T}_Z\|_p&=\sup_{(y,v)\in S_Z}\Big(\sum_{i=1}^k\|T_i(y,v)\|^p\Big)^{1/p}\\
		&=\sup_{(y,v)\in S_Z}\Big(\sum_{i=1}^k\|A_iy\|^p\Big)^{1/p}\\
		&\geq \sup_{(y,\theta)\in S_Z}\Big(\sum_{i=1}^k\|A_iy\|^p\Big)^{1/p}\\
		&=\sup_{y\in S_Y}\Big(\sum_{i=1}^k\|A_iy\|^p\Big)^{1/p}\\
		&=\|\mathcal{T}_Y\|_p.
	\end{align*}
Thus,
\begin{equation}\label{eq-01}
	\|\mathcal{T}_Z\|_p\geq \|\mathcal{T}_Y\|_p.
	\end{equation}
On the other hand, suppose $(y,v)\in S_Z.$  Then $\|y\|\leq 1.$ Now, for $y=\theta,$ we have, $\Big(\sum_{i=1}^k\|A_iy\|^p\Big)^{1/p}=0\leq \|\mathcal{T}_Y\|_p$ and for $y\neq \theta,$ $$\Big(\sum_{i=1}^k\|A_iy\|^p\Big)^{1/p}= \|y\|\Big(\sum_{i=1}^k\|A_i\frac{y}{\|y\|}\|^p\Big)^{1/p}\leq \Big(\sum_{i=1}^k\|A_i\frac{y}{\|y\|}\|^p\Big)^{1/p}\leq\|\mathcal{T}_Y\|_p.$$
Thus, for each $(y,v)\in S_Z,$ $\Big(\sum_{i=1}^k\|T_i(y,v)\|^p\Big)^{1/p}=\Big(\sum_{i=1}^k\|A_iy\|^p\Big)^{1/p}\leq\|\mathcal{T}_Y\|_p.$
This implies that 
\begin{equation}\label{eq-02}
	\|\mathcal{T}_Z\|_p\leq \|\mathcal{T}_Y\|_p.
\end{equation}
Now, from (\ref{eq-01}) and (\ref{eq-02}), it follows that
	$\|\mathcal{T}_Z\|_p= \|\mathcal{T}_Y\|_p=1.$ \\
	Next, we show that $w_p(\mathcal{T}_Z)=w_p(\mathcal{T}_Y).$
 Note that, $(y,y^*)\in \Pi_Y$ is equivalent to $((y,\theta),(y^*,\theta))\in \Pi_Z.$ Now,
	\begin{align*}
		w_p(\mathcal{T}_Z)&=\sup_{((y,v),(y^*,v^*))\in \Pi_Z}\Big(\sum_{i=1}^k|(y^*,v^*)(T_i(y,v))|^p\Big)^{1/p}\\
	&=\sup_{((y,v),(y^*,v^*))\in \Pi_Z}\Big(\sum_{i=1}^k|y^*(A_iy)|^p\Big)^{1/p}\\
	&\geq \sup_{((y,\theta),(y^*,\theta))\in \Pi_Z}\Big(\sum_{i=1}^k|y^*(A_iy)|^p\Big)^{1/p}\\
	&=\sup_{(y,y^*)\in \Pi_Y}\Big(\sum_{i=1}^k|y^*(A_iy)|^p\Big)^{1/p}\\
	&=w_p(\mathcal{T}_Y).
	\end{align*}  
Thus,
\begin{equation}\label{eq-03}
w_p(\mathcal{T}_Z)	\geq w_p(\mathcal{T}_Y).
\end{equation}
On the other hand, let $((y,v),(y^*,v^*))\in \Pi_Z.$ Then $\|y\|\leq 1,$ $\|y^*\|\leq 1.$ Now, suppose that $q=\infty.$ Then from \cite[Prop. 6.1]{CKS}, it follows that $\|y^*\|+\|v^*\|=1$ and the following three cases may hold.\\
(i) $\|y\|<\|v\|,$ (ii) $\|y\|=\|v\|,$ (iii) $\|y\|>\|v\|.$ \\
 For case (i), by \cite[Prop. 6.1]{CKS}, we get $y^*=\theta.$ Hence, 
\[\Big(\sum_{i=1}^k|(y^*,v^*)(T_i(y,v))|^p\Big)^{1/p}=\Big(\sum_{i=1}^k|y^*(A_iy)|^p\Big)^{1/p}=0\leq w_p(\mathcal{T}_Y).\]
Note that, for cases (ii) and (iii), if $y^*=\theta$ holds, then  similarly, we get the above inequality. Therefore, consider $y^*\neq \theta.$  For case (ii), i.e., if $\|y\|=\|v\|$ holds, then clearly, $\|y\|\neq 0$ and 
\[1=(y^*,v^*)(y,v)=y^*(y)+v^*(v)\leq \|y^*\|\|y\|+\|v^*\|\|v\|=(\|y^*\|+\|v^*\|)\|y\|\leq 1.\]
Thus, the above inequality is actually an equality and so $y^*(y)=\|y^*\|\|y\|.$ For case (iii), clearly, $\|y\|\neq 0$ and once again using \cite[Prop. 6.1]{CKS}, we get $y^*\in J(y),$ i.e., $y^*(y)=\|y^*\|\|y\|.$ Therefore, if case (ii) or case (iii) holds, then 
	\begin{align*}
&\Big(\sum_{i=1}^k|(y^*,v^*)(T_i(y,v))|^p\Big)^{1/p}\\
	&=\Big(\sum_{i=1}^k|y^*(A_iy)|^p\Big)^{1/p}\\
	&=\|y^*\|\|y\|\Big(\sum_{i=1}^k\Big|\frac{y^*}{\|y^*\|}\Big(A_i\frac{y}{\|y\|}\Big)\Big|^p\Big)^{1/p}\\
	&\leq w_p(\mathcal{T}_Y).
\end{align*}  
Thus, for each case, $\Big(\sum_{i=1}^k|(y^*,v^*)(T_i(y,v))|^p\Big)^{1/p}\leq w_p(\mathcal{T}_Y).$ Since the last inequality holds for each $((y,v),(y^*,v^*))\in \Pi_Z,$ we get 
\begin{equation}\label{eq-04}
w_p(\mathcal{T}_Z)\leq w_p(\mathcal{T}_Y).
\end{equation}
Thus, from (\ref{eq-03}) and (\ref{eq-04}), it follows that if $q=\infty,$ then $w_p(\mathcal{T}_Z)=w_p(\mathcal{T}_Y).$ Similarly, for $1\leq q<\infty,$ we can show that if $((y,v),(y^*,v^*))\in \Pi_Z,$ then either $y=\theta$ or $0<y^*(y)=\|y^*\|\|y\|.$ Now, using similar arguments, we can prove that $$w_p(\mathcal{T}_Z)=w_p(\mathcal{T}_Y).$$
Thus, for each $\mathcal{T}_Y\in L(Y)^k$ with $\|\mathcal{T}_Y\|_p=1,$
\[n_{(p,k)}(Z)\leq w_p(\mathcal{T}_Z)=w_p(\mathcal{T}_Y).\]
Therefore, it follows that $n_{(p,k)}(Z)\leq n_{(p,k)}(Y)$ and our claim is proved.\\
Now, fix $\lambda_0\in \Lambda.$ For $X=[\oplus_{\lambda\in \Lambda} X_{\lambda}]_{c_0},$ assume $V=[\oplus_{\lambda\in \Lambda,\lambda\neq \lambda_0} X_{\lambda}]_{c_0}.$ Then we can write  $$X=[X_{\lambda_0}\oplus V]_{\ell_\infty},$$ and so $n_{(p,k)}(X)\leq n_{(p,k)}(X_{\lambda_0}).$ Similarly, if $X=[\oplus_{\lambda\in \Lambda} X_{\lambda}]_{\ell_q},$ where $1\leq q\leq \infty,$ then  assuming $V=[\oplus_{\lambda\in \Lambda,\lambda\neq \lambda_0} X_{\lambda}]_{\ell_q},$  writing  $$X=[X_{\lambda_0}\oplus V]_{\ell_q},$$ we get $n_{(p,k)}(X)\leq n_{(p,k)}(X_{\lambda_0}).$ This completes the proof of the theorem.
\end{proof}
We would like to remark here that in \cite{MP}, Mart\'in and Pay\'a studied the numerical index of direct sum of Banach spaces. They proved the following result.
\begin{theorem}\cite[Prop. 1 \& Rem. 2a]{MP}\label{th-mp}
Let $\{X_\lambda:\lambda\in \Lambda\}$ be a family of Banach spaces.  Then 
\[n([\oplus_{\lambda\in \Lambda} X_{\lambda}]_{c_0})=n([\oplus_{\lambda\in \Lambda} X_{\lambda}]_{\ell_1})=n([\oplus_{\lambda\in \Lambda} X_{\lambda}]_{\ell_\infty})=\inf_{\lambda\in \Lambda}n(X_\lambda).\]
If $1<q<\infty,$ then 
\[n([\oplus_{\lambda\in \Lambda} X_{\lambda}]_{\ell_q})\leq\inf_{\lambda\in \Lambda}n(X_\lambda).\]
\end{theorem}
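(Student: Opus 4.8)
The statement contains two genuinely different inequalities, and the plan is to separate them. For the upper bounds (the inequality ``$\le$'' in every case, and hence the entire assertion for $1<q<\infty$) I would simply specialize the paper's own Theorem \ref{th-directsum} to $k=1$. Indeed, for a single operator one has $\|(T)\|_p=\|T\|$ and $w_p((T))=w(T)$, so $n_{(p,1)}(X)=n(X)$ for every $p$ and every $X$; thus Theorem \ref{th-directsum} applied with $k=1$ gives at once
\[ n(X)=n_{(p,1)}(X)\le\inf_{\lambda}n_{(p,1)}(X_\lambda)=\inf_\lambda n(X_\lambda) \]
for $X=[\oplus_\lambda X_\lambda]_{c_0}$ and for $X=[\oplus_\lambda X_\lambda]_{\ell_q}$ with any $1\le q\le\infty$. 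This disposes of the case $1<q<\infty$ completely and of the ``$\le$'' half in the three remaining cases.

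It remains to prove the reverse inequality $n(X)\ge m$, where $m:=\inf_\lambda n(X_\lambda)$, for the three sums $c_0,\ell_1,\ell_\infty$; this is the heart of the matter. First I would record a compression lemma valid for all three. Writing $\iota_\mu\colon X_\mu\to X$ for the canonical inclusion and $\pi_\mu\colon X\to X_\mu$ for the coordinate projection, one checks from the description of the dual of an absolute sum that $(z,f)\in\Pi_{X_\mu}$ implies $(\iota_\mu z,\,f\circ\pi_\mu)\in\Pi_X$: a single-coordinate vector keeps its coordinate norm in each of the three sums and in each of the dual sums, so $\|\iota_\mu z\|=\|z\|=1$ and $\|f\circ\pi_\mu\|=\|f\|=1$, while $(f\circ\pi_\mu)(\iota_\mu z)=f(z)=1$. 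Since $(f\circ\pi_\mu)(T\iota_\mu z)=f(\pi_\mu T\iota_\mu z)$, taking the supremum over $\Pi_{X_\mu}$ yields $w(\pi_\mu T\iota_\mu)\le w(T)$, whence $w(T)\ge n(X_\mu)\|\pi_\mu T\iota_\mu\|\ge m\,\|\pi_\mu T\iota_\mu\|$ for every $\mu$. In particular, if some ``diagonal block'' $\pi_\mu T\iota_\mu$ nearly attains $\|T\|=1$, we are finished.

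The substantive step is to handle operators whose norm is carried by the off-diagonal blocks. Here I would localize at a dominant coordinate $\lambda_0$ and exploit the single-coordinate structure of extreme points in the appropriate space. For the $\ell_1$-sum the extreme points of $B_X$ themselves live in one coordinate, so I would take $x=\iota_{\lambda_0}e$ with $e\in E_{X_{\lambda_0}}$ nearly attaining $\|T\|$ and use that any $x^*\in J(x)$ is completely free in the coordinates $\mu\ne\lambda_0$. For the $c_0$-sum the space $X^*=[\oplus_\lambda X_\lambda^*]_{\ell_1}$ has single-coordinate extreme points, so I would instead feed the formula $w(T)=\sup\{|x^{**}(T^*x^*)|:x^*\in E_{X^*},\,x^{**}\in E_{X^{**}},\,x^{**}(x^*)=1\}$ recalled from \cite{MO}, with $x^*$ supported at $\lambda_0$ and $x^{**}$ ranging over the product of extreme points of the biduals. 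The $\ell_\infty$-sum is dual to the $\ell_1$-situation. In each case the relevant evaluation splits as a diagonal term, bounded below through $n(X_{\lambda_0})\ge m$ via the compression lemma, plus off-diagonal terms of the shape $\sum_{\mu\ne\lambda_0}\|(\text{block})\,e\|$ (or its dual), which I can make positive and as large as the off-diagonal part of the norm by aligning the free coordinates of the functional with the corresponding blocks. Combining this with the upper bound gives the asserted equalities, and the reduction of an arbitrary family to the two-summand case $X=[X_{\lambda_0}\oplus V]_E$, exactly as in the proof of Theorem \ref{th-directsum}, settles the infinite case.

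The main obstacle is precisely this last combination. Writing $a$ and $b$ for the diagonal and off-diagonal parts of the (nearly unit) norm, a careless argument produces the diagonal lower bound $m\,a$ and the off-diagonal lower bound $b$ from \emph{two different} witnesses and so only yields $\max(m a,b)\ge\tfrac12 m(a+b)$, losing a factor of two; to recover the sharp constant $m$ one must realize the diagonal contribution (governed by $n(X_{\lambda_0})$) and the off-diagonal contributions through one and the same pair in $\Pi_X$, that is, choose the norm-attaining datum and the numerical-radius datum consistently. Controlling this interaction, together with the passage to the bidual in the $c_0$-case, is the technical core of \cite[Prop.~1]{MP}; this is also exactly the point where the argument breaks for $1<q<\infty$, since in an $\ell_q$-sum the extreme points of neither $B_X$ nor $B_{X^*}$ localize to a single coordinate, leaving only the upper bound.
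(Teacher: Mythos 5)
The first thing to note is that the paper offers no proof of this statement to compare against: Theorem \ref{th-mp} is imported verbatim from \cite[Prop.~1 \& Rem.~2a]{MP} and used as a black box (e.g.\ in Corollary \ref{cor-direct}). Within that context, the first half of your proposal is correct and self-contained: since $\|(T)\|_p=\|T\|$ and $w_p((T))=w(T)$ give $n_{(p,1)}(X)=n(X)$ for every $X$ and $p$, Theorem \ref{th-directsum} specialized to $k=1$ does yield $n(X)\le\inf_\lambda n(X_\lambda)$ for $X=[\oplus_\lambda X_\lambda]_{c_0}$ and $X=[\oplus_\lambda X_\lambda]_{\ell_q}$, $1\le q\le\infty$, and there is no circularity because the proof of Theorem \ref{th-directsum} nowhere invokes Theorem \ref{th-mp}. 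This settles the assertion for $1<q<\infty$ and the inequality ``$\le$'' in the three equalities.

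The genuine gap is that the reverse inequality $n(X)\ge m:=\inf_\lambda n(X_\lambda)$ for the $c_0$-, $\ell_1$- and $\ell_\infty$-sums --- which is the entire content of the statement beyond the easy half --- is never established. Your compression lemma is fine: $(z,f)\in\Pi_{X_\mu}$ does give $(\iota_\mu z,f\circ\pi_\mu)\in\Pi_X$, hence $w(\pi_\mu T\iota_\mu)\le w(T)$; but this controls only the diagonal blocks. For operators whose norm is carried off the diagonal you outline a strategy and then stop exactly at the decisive step: producing \emph{one} pair $(x,x^*)\in\Pi_X$ that simultaneously realizes the diagonal contribution at rate $m$ and the off-diagonal contribution at rate $1$, rather than two different witnesses, which (as you compute) only gives $\max(ma,b)\ge\frac{m}{1+m}(a+b)$ and loses a constant. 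Nothing in the sketch shows how the constrained term $f_{\lambda_0}\bigl(\pi_{\lambda_0}T\iota_{\lambda_0}z\bigr)$, with $f_{\lambda_0}$ forced to lie in $J(z)$ by the norming condition, can be given both modulus at least $m\|\pi_{\lambda_0}T\iota_{\lambda_0}z\|$ and a phase compatible with the aligned off-diagonal terms; the bidual passage needed in the $c_0$-case is likewise only gestured at. Explicitly deferring this to ``the technical core of \cite[Prop.~1]{MP}'' reduces the statement to precisely the step that makes it nontrivial, so as a standalone argument the proposal is incomplete; given that the paper itself handles the statement by citation, the honest fix is to cite \cite{MP} for the equalities and keep your $k=1$ specialization only as a remark recovering the upper bounds.
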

As an immediate consequence of Theorem \ref{th-directsum} and Theorem \ref{th-mp}, we can compute the $(p,k)$-th joint numerical index of a direct sum of certain class of Banach spaces. 
\begin{cor}\label{cor-direct}
	Suppose $\{X_\lambda:\lambda\in \Lambda\}$ is a family of Banach spaces. Let $$X\in \Bigg\{[\oplus_{\lambda\in \Lambda} X_{\lambda}]_{\ell_1},[\oplus_{\lambda\in \Lambda} X_{\lambda}]_{\ell_\infty}, [\oplus_{\lambda\in \Lambda} X_{\lambda}]_{c_0}\Bigg\}. $$ Then the following are true.\\
	(i) If $n(X)=1$ and $n_{(p,k)}(X_{\lambda_0})=\frac{1}{k^{1/p}}$ for some $\lambda_0\in \Lambda,$ then $n_{(p,k)}(X)=\frac{1}{k^{1/p}}.$\\
	(ii) If $n_{(p,k)}(X_\lambda)=\frac{n(X_\lambda)}{k^{1/p}}$ for each $\lambda  \in \Lambda,$  then $n_{(p,k)}(X)=\frac{n(X)}{k^{1/p}}.$
\end{cor}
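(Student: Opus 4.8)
The plan is to obtain both equalities by a sandwich argument, squeezing $n_{(p,k)}(X)$ between the universal lower bound of Theorem \ref{th-boundnpkx} and the direct-sum upper bound of Theorem \ref{th-directsum}, and then matching the two bounds using the identity $n(X)=\inf_{\lambda\in\Lambda}n(X_\lambda)$ supplied by Theorem \ref{th-mp}. In each of the three admissible cases for $X$ (the $\ell_1$-, $\ell_\infty$-, or $c_0$-sum) the hypotheses of both Theorem \ref{th-directsum} and Theorem \ref{th-mp} are met, so all three cited estimates are available simultaneously.

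For part (i), I would first invoke Theorem \ref{th-directsum}: since $\lambda_0\in\Lambda$, the infimum over $\Lambda$ is at most its value at $\lambda_0$, giving
\[
n_{(p,k)}(X)\le \inf_{\lambda\in\Lambda}n_{(p,k)}(X_\lambda)\le n_{(p,k)}(X_{\lambda_0})=\frac{1}{k^{1/p}}.
\]
For the reverse inequality, the lower bound of Theorem \ref{th-boundnpkx} together with the hypothesis $n(X)=1$ yields
\[
n_{(p,k)}(X)\ge \frac{n(X)}{k^{1/p}}=\frac{1}{k^{1/p}}.
\]
Combining the two forces $n_{(p,k)}(X)=\frac{1}{k^{1/p}}$.

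For part (ii), I would again start from Theorem \ref{th-directsum}, now feeding in the hypothesis $n_{(p,k)}(X_\lambda)=\frac{n(X_\lambda)}{k^{1/p}}$ for every $\lambda$; pulling the fixed positive factor $k^{-1/p}$ out of the infimum and applying Theorem \ref{th-mp} gives
\[
n_{(p,k)}(X)\le \inf_{\lambda\in\Lambda}\frac{n(X_\lambda)}{k^{1/p}}=\frac{1}{k^{1/p}}\inf_{\lambda\in\Lambda}n(X_\lambda)=\frac{n(X)}{k^{1/p}}.
\]
The matching lower bound $n_{(p,k)}(X)\ge \frac{n(X)}{k^{1/p}}$ is once more immediate from Theorem \ref{th-boundnpkx}, and the two inequalities yield the claimed equality.

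There is no real obstacle here: the entire content has been front-loaded into Theorems \ref{th-boundnpkx}, \ref{th-directsum}, and \ref{th-mp}, and the corollary is a clean application of these. The only points requiring care are bookkeeping ones — verifying that the chosen $X$ indeed falls under the hypotheses of both Theorem \ref{th-directsum} and Theorem \ref{th-mp} (all three of the $c_0$-, $\ell_1$-, and $\ell_\infty$-sums are covered by each), and noting that the scalar $k^{-1/p}$ may be moved through the infimum since it is a constant independent of $\lambda$.
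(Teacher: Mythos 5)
Your proof is correct and follows essentially the same route as the paper's: both parts sandwich $n_{(p,k)}(X)$ between the lower bound $\frac{n(X)}{k^{1/p}}$ from Theorem \ref{th-boundnpkx} and the upper bound $\inf_{\lambda}n_{(p,k)}(X_\lambda)$ from Theorem \ref{th-directsum}, invoking Theorem \ref{th-mp} in part (ii) to identify $\inf_\lambda n(X_\lambda)$ with $n(X)$. No gaps; your bookkeeping remarks (applicability to all three sums, pulling the constant $k^{-1/p}$ through the infimum) are exactly the right points of care.
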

\begin{proof}
	(i) Note that,
	\[\frac{1}{k^{1/p}}=\frac{n(X)}{k^{1/p}}\leq n_{(p,k)}(X)\leq \inf\{n_{(p,k)}(X_\lambda):\lambda\in \Lambda\}\leq n_{(p,k)}(X_{\lambda_0})=\frac{1}{k^{1/p}},\]
	where the first inequality follows from Theorem \ref{th-boundnpkx} and the second inequality follows from Theorem \ref{th-directsum}. Thus, the above inequalities are actually equalities and so $n_{(p,k)}(X)=\frac{1}{k^{1/p}}.$\\
	(ii) Note that, 
	\[\frac{n(X)}{k^{1/p}}\leq n_{(p,k)}(X)\leq \inf\{n_{(p,k)}(X_\lambda):\lambda\in \Lambda\}= \inf\Big\{\frac{n(X_{\lambda})}{k^{1/p}}:\lambda\in \Lambda\Big\}=\frac{n(X)}{k^{1/p}},\]
	where the last equality follows from Theorem \ref{th-mp}. This completes the proof of (ii).
\end{proof}
\begin{remark}
From Corollary \ref{cor-direct} (i), it follows that the class of Banach spaces with the $(p,k)$-th joint numerical index $\frac{1}{k^{1/p}}$ is stable under $\ell_\infty$-sum (or $\ell_1$-sum or $c_0$-sum), if the sum has numerical index $1.$  	
\end{remark}
In the next theorem, we get another upper bound for $\ell_q$-sum $(1<q<\infty)$ of Banach spaces.
\begin{theorem}\label{th-ndirect}
Let $X,Y$ be Banach spaces. Suppose $Z=[X\oplus Y]_{\ell_q},$ where $q\in(1,\infty)$ and $\frac{1}{q}+\frac{1}{q'}=1.$ Then $n_{(p,k)}(Z)\leq \max\{\frac{1}{q},\frac{1}{q'}\}.$ 	
\end{theorem}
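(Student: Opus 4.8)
The plan is to deduce the bound from the ordinary numerical index. By Theorem~\ref{th-boundnpkx} we have $n_{(p,k)}(Z)\le n(Z)$, so it suffices to exhibit a single operator $T\in L(Z)$ with $\|T\|=1$ and $w(T)\le\max\{1/q,1/q'\}$; then $n_{(p,k)}(Z)\le n(Z)\le w(T)\le\max\{1/q,1/q'\}$, since $n(Z)$ is the infimum of $w(\cdot)$ over unit operators.

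First I would fix the operator. Choosing $u^*\in S_{X^*}$ and $v\in S_Y$, define $T\in L(Z)$ by $T(x,y)=(\theta,\,u^*(x)\,v)$. Since $\|T(x,y)\|=|u^*(x)|$, one gets $\|T\|=\sup_{\|x\|\le1}|u^*(x)|=\|u^*\|=1$. To estimate $w(T)$ I would use $Z^*=[X^*\oplus Y^*]_{\ell_{q'}}$ and pin down $\Pi_Z$: for $((x,y),(x^*,y^*))\in\Pi_Z$,
\[
1=|x^*(x)+y^*(y)|\le|x^*(x)|+|y^*(y)|\le\|x^*\|\|x\|+\|y^*\|\|y\|\le1,
\]
the last step being H\"older's inequality for the exponents $q',q$ together with $\|(x,y)\|=\|(x^*,y^*)\|=1$. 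Equality is forced throughout, and the equality case of H\"older gives $\|x^*\|=\|x\|^{q-1}$ and $\|y^*\|=\|y\|^{q-1}$.

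Writing $s=\|x\|$ and $t=\|y\|$ (so $s^q+t^q=\|(x,y)\|^q=1$), for every $((x,y),(x^*,y^*))\in\Pi_Z$ I would estimate
\[
|(x^*,y^*)(T(x,y))|=|u^*(x)\,y^*(v)|\le\|x\|\,\|y^*\|=s\,t^{q-1}.
\]
Since $(q-1)q'=q$, Young's inequality now yields
\[
s\,t^{q-1}\le\frac{s^q}{q}+\frac{t^{(q-1)q'}}{q'}=\frac{s^q}{q}+\frac{t^q}{q'}\le\max\Big\{\tfrac1q,\tfrac1{q'}\Big\}(s^q+t^q)=\max\Big\{\tfrac1q,\tfrac1{q'}\Big\}.
\]
Taking the supremum over $\Pi_Z$ gives $w(T)\le\max\{1/q,1/q'\}$, which completes the argument.

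The H\"older two-sided squeeze identifying $\Pi_Z$ is routine, and the absolute values above handle the real and complex cases simultaneously. The genuinely decisive choice — and the step I expect to be the crux — is the rank-one operator $T(x,y)=(\theta,u^*(x)v)$: it collapses the numerical radius to the single product $\|x\|\,\|y\|^{q-1}$, precisely the quantity on which Young's inequality produces the constant $\max\{1/q,1/q'\}$. A rotation-type operator $T(x,y)=(-\,v^*(y)u,\,u^*(x)v)$ would also have norm one, but the interference between its two coordinates forces a delicate sign analysis that makes its numerical radius harder to control, so I would avoid it.
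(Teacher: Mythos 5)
Your proof is correct, and it reaches the bound by a genuinely shorter route than the paper. The paper stays at the tuple level throughout: it takes an arbitrary $\mathcal{S}=(S_1,\ldots,S_k)\in L(Y,X)^k$ with $\|\mathcal{S}\|_p=1$, lifts it to $\mathcal{T}\in L(Z)^k$ via $T_i(x,y)=(S_iy,\theta)$, verifies $\|\mathcal{T}\|_p=1$, and then bounds $w_p(\mathcal{T})$ case by case using the description of $\Pi_Z$ quoted from \cite[Prop. 6.1]{CKS} together with Young's inequality. You instead invoke the general inequality $n_{(p,k)}(Z)\le n(Z)$ from Theorem~\ref{th-boundnpkx}, which collapses the problem to the classical numerical index (the case $k=1$), so that a single rank-one operator $T(x,y)=(\theta,u^*(x)v)$ suffices; and you replace the citation to \cite{CKS} by an inline derivation of the needed structure of $\Pi_Z$ from the equality case of H\"older's inequality --- note that your estimate only uses the norm identities $\|x^*\|=\|x\|^{q-1}$ and $\|y^*\|=\|y\|^{q-1}$, not the norm-attainment part of the \cite{CKS} description, so this is legitimate. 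The decisive computation is the same in both arguments: an off-diagonal operator whose numerical radius is dominated by $\|x\|\,\|y\|^{q-1}$, which Young's inequality caps at $\max\{\frac{1}{q},\frac{1}{q'}\}$. Your reduction loses nothing for the statement at hand (it amounts to proving the $k=1$ case, i.e.\ $n(Z)\le\max\{\frac{1}{q},\frac{1}{q'}\}$, and quoting monotonicity in $k$), and it is more self-contained; what the paper's tuple-level construction buys is only the slightly more detailed conclusion that \emph{every} unit-norm tuple $\mathcal{S}\in L(Y,X)^k$ induces a unit-norm tuple in $L(Z)^k$ whose $p$-th joint numerical radius is at most $\max\{\frac{1}{q},\frac{1}{q'}\}$.
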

\begin{proof}
	Choose $\mathcal{S}=(S_1,\ldots,S_k)\in L(Y,X)^k$ such that $\|\mathcal{S}\|_p=1.$ For $1\leq i\leq k,$ define $T_i\in L(Z)$ by $T_i(x,y)=(S_iy,\theta)$ for all $(x,y)\in Z.$ Suppose $\mathcal{T}=(T_1,\ldots,T_k)\in L(Z)^k.$ Proceeding as Theorem \ref{th-directsum}, we can show that $\|\mathcal{T}\|_p=\|\mathcal{S}\|_p=1.$ Now, suppose that $((x,y),(x^*,y^*))\in \Pi_Z.$ Then from \cite[Prop. 6.1]{CKS}, it follows that either of the following is true.\\
	(i) $x=x^*=\theta,$ (ii) $y=y^*=\theta,$ (iii) $x\neq \theta,y\neq \theta, x^*=\|x\|^{q-1}f, y^*=\|y\|^{q-1}g,$ where $f\in J(x)$ and $g\in J(y).$\\
	If (i) holds, then $$\Big(\sum_{i=1}^k|(\theta,y^*)(T_i(\theta,y))|^p\Big)^{1/p}=\Big(\sum_{i=1}^k|(\theta,y^*)(S_iy,\theta)|^p\Big)^{1/p}=0.$$
	If (ii) holds, then $$\Big(\sum_{i=1}^k|(x^*,\theta)(T_i(x,\theta))|^p\Big)^{1/p}=\Big(\sum_{i=1}^k|(x^*,\theta)(S_i\theta,\theta)|^p\Big)^{1/p}=0.$$
	If (iii) holds, then 
	\begin{align*}
		\Big(\sum_{i=1}^k|(x^*,y^*)(T_i(x,y))|^p\Big)^{1/p}&=\Big(\sum_{i=1}^k|(\|x\|^{q-1}f,\|y\|^{q-1}g)(S_iy,\theta)|^p\Big)^{1/p}\\
		&=\Big(\sum_{i=1}^k\|x\|^{p(q-1)}|f(S_iy)|^p\Big)^{1/p}\\
		&\leq\|x\|^{q-1}\Big(\sum_{i=1}^k\|S_iy\|^p\Big)^{1/p}\\
		&\leq\|x\|^{q-1}\|\mathcal{S}\|_p\|y\|\\
		&=\|x\|^{q-1}\|y\|\\
		&\leq \frac{\|y\|^q}{q}+\frac{\|x\|^{q'(q-1)}}{q'}~(\text{using~Young's~inequality})\\
		&=\frac{\|y\|^q}{q}+\frac{\|x\|^{q}}{q'}\\
		&\leq \max\{\frac{1}{q},\frac{1}{q'}\} (\|x\|^q+\|y\|^q)\\
		&=\max\{\frac{1}{q},\frac{1}{q'}\}, ~(\text{since}~ (x,y)\in S_Z).
	\end{align*}
Therefore, $w_p(\mathcal{T})\leq \max\{\frac{1}{q},\frac{1}{q'}\}.$ Now, it follows that 
\[n_{(p,k)}(Z)\leq w_p(\mathcal{T})\leq \max\{\frac{1}{q},\frac{1}{q'}\},\]
completing the proof of the theorem.
\end{proof}
Combining Theorem \ref{th-directsum} and Theorem \ref{th-ndirect}, we get the following inequality for the $(p,k)$-th joint numerical index of $\ell_q$-sum $(1<q<\infty)$ of Banach spaces.
\begin{cor}\label{cor-ndirect}
	Let $\{X_\lambda:\lambda\in \Lambda\}$ be a family of Banach spaces. Suppose  $X=[\oplus_{\lambda\in \Lambda} X_{\lambda}]_{\ell_q},$ where $1< q< \infty.$  Let $\frac{1}{q}+\frac{1}{q'}=1.$ Then
	\[n_{(p,k)}(X)\leq \inf\{\max\{\frac{1}{q},\frac{1}{q'}\},n_{(p,k)}(X_{\lambda}):\lambda\in \Lambda\}.\] 	
\end{cor}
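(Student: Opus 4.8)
The plan is to obtain this infimum bound simply by assembling the two upper bounds already proved, namely Theorem \ref{th-directsum} and Theorem \ref{th-ndirect}. The key observation is that the right-hand side is the infimum of two kinds of quantities: the single constant $\max\{\frac{1}{q},\frac{1}{q'}\}$, which depends only on $q$, and the numbers $n_{(p,k)}(X_\lambda)$ for $\lambda\in\Lambda$. Accordingly, it suffices to establish the two separate inequalities $n_{(p,k)}(X)\leq \max\{\frac{1}{q},\frac{1}{q'}\}$ and $n_{(p,k)}(X)\leq n_{(p,k)}(X_\lambda)$ for every $\lambda\in\Lambda$; taking the infimum over all these bounds then yields the assertion.

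For the second family of inequalities I would invoke Theorem \ref{th-directsum} directly. Since $X=[\oplus_{\lambda\in\Lambda}X_\lambda]_{\ell_q}$ with $1<q<\infty$ is a special case of the $\ell_q$-sum ($1\leq q\leq\infty$) treated there, we immediately obtain $n_{(p,k)}(X)\leq \inf\{n_{(p,k)}(X_\lambda):\lambda\in\Lambda\}$, and in particular $n_{(p,k)}(X)\leq n_{(p,k)}(X_\lambda)$ for each fixed $\lambda$.

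For the constant bound I would fix an arbitrary $\lambda_0\in\Lambda$ and split the sum by setting $V=[\oplus_{\lambda\in\Lambda,\lambda\neq\lambda_0}X_\lambda]_{\ell_q}$, so that $X=[X_{\lambda_0}\oplus V]_{\ell_q}$ is realized as a two-fold $\ell_q$-sum. Since $1<q<\infty$, Theorem \ref{th-ndirect} applies verbatim to this decomposition and gives $n_{(p,k)}(X)\leq \max\{\frac{1}{q},\frac{1}{q'}\}$. The crucial point making this clean is that the constant produced by Theorem \ref{th-ndirect} depends only on $q$ and not on the two summands, which is precisely why the same value arises no matter how the sum is split.

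Combining the previous two paragraphs, $n_{(p,k)}(X)$ is bounded above both by $\max\{\frac{1}{q},\frac{1}{q'}\}$ and by every $n_{(p,k)}(X_\lambda)$, hence by their infimum, which is the desired inequality. Since the whole argument is a direct assembly of the two theorems, I anticipate no substantial obstacle; the only point deserving a word of care is the degenerate case $\abs{\Lambda}=1$, where the splitting forces $V$ to be the trivial space, so that Theorem \ref{th-ndirect} is applied with a zero summand. As $[X_{\lambda_0}\oplus\{0\}]_{\ell_q}$ is isometric to $X_{\lambda_0}$, this causes no difficulty and the constant bound still holds.
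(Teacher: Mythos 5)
Your main assembly is exactly the paper's proof: the paper gives no argument beyond ``combining Theorem \ref{th-directsum} and Theorem \ref{th-ndirect},'' and your two steps --- Theorem \ref{th-directsum} for $n_{(p,k)}(X)\leq n_{(p,k)}(X_\lambda)$ for every $\lambda$, and the splitting $X=[X_{\lambda_0}\oplus V]_{\ell_q}$ (the same device used at the end of the proof of Theorem \ref{th-directsum}) to invoke Theorem \ref{th-ndirect} for the constant bound --- are precisely what is intended. For $|\Lambda|\geq 2$ your argument is complete and correct.

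However, your closing remark about the degenerate case $|\Lambda|=1$ is false, not harmless. If $V$ is the zero space, Theorem \ref{th-ndirect} neither applies nor remains true: its proof begins by choosing $\mathcal{S}\in L(V,X_{\lambda_0})^k$ with $\|\mathcal{S}\|_p=1$, which is impossible when $L(V,X_{\lambda_0})=\{O\}$, and the conclusion itself fails, since $[X_{\lambda_0}\oplus\{\theta\}]_{\ell_q}$ is isometric to $X_{\lambda_0}$, so $n_{(p,k)}\big([X_{\lambda_0}\oplus\{\theta\}]_{\ell_q}\big)=n_{(p,k)}(X_{\lambda_0})$, which can exceed $\max\{\frac{1}{q},\frac{1}{q'}\}$. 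Concretely, take $k=1$ and $X_{\lambda_0}=\ell_1^m$: then $n_{(p,1)}(X_{\lambda_0})=n(\ell_1^m)=1>\max\{\frac{1}{q},\frac{1}{q'}\}$. So the constant bound genuinely breaks down for a singleton index set; the corollary (and hence any proof of it) must be read as assuming at least two nonzero summands, rather than the degenerate case being covered by an isometry argument as you claim.
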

Note that, if $k=1,$ then for all $1\leq p<\infty,~n_{(p,k)}(X)=n(X).$ Therefore, in Corollary \ref{cor-ndirect}, if we choose $X_\lambda$ such that $\max\{\frac{1}{q},\frac{1}{q'}\}<\inf \{n(X_\lambda):\lambda\in \Lambda\},$ then Corollary \ref{cor-ndirect} improves on both Theorem \ref{th-directsum} and Theorem \ref{th-mp}.
Finally, our aim is to use the results of this section to explicitly compute the $(p,k)$-th joint numerical index of some classical Banach spaces.
\begin{theorem}\label{th-l1}
 Suppose $m\in \mathbb{N}\setminus \{1\}.$ Then \\
(i) $n_{(p,k)}(\ell_\infty^m)=\frac{1}{k^{1/p}},$ if $1\leq k\leq m.$\\
(ii) $n_{(p,k)}(\ell_\infty)=\frac{1}{k^{1/p}}.$\\
(iii) $n_{(p,2)}(\ell_1^m)=\frac{1}{2^{1/p}}.$\\
(iv)  $n_{(p,2)}(\ell_1)=\frac{1}{2^{1/p}}.$
\end{theorem}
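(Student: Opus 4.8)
The plan is to obtain all four lower bounds for free and to concentrate the real work on exhibiting, in each part, a single tuple realizing the claimed value as an upper bound. Since $n(\ell_\infty^m)=n(\ell_\infty)=n(\ell_1^m)=n(\ell_1)=1$ (recorded in the introduction), Theorem \ref{th-boundnpkx} immediately gives $\tfrac{1}{k^{1/p}}\le n_{(p,k)}(X)$ for $X=\ell_\infty^m,\ell_\infty$ and $\tfrac{1}{2^{1/p}}\le n_{(p,2)}(X)$ for $X=\ell_1^m,\ell_1$. Thus in every part it remains only to produce a nonzero tuple $\mathcal{T}$ with $w_p(\mathcal{T})/\|\mathcal{T}\|_p$ equal to the target value, since by homogeneity $n_{(p,k)}(X)=\inf\{w_p(\mathcal{T})/\|\mathcal{T}\|_p:\mathcal{T}\neq(O,\ldots,O)\}$.

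For (i) I would take the tuple already appearing in the example after Theorem \ref{th-extremenorm}: with $\{e_1,\dots,e_m\}$ the standard basis of $\ell_\infty^m$ and $k\le m$, let $T_i$ be the coordinate projection $T_ie_i=e_i$, $T_ie_j=\theta$ for $j\neq i$, and $\mathcal{T}=(T_1,\dots,T_k)$. Since $E_{\ell_\infty^m}$ consists of the sign vectors $(\varepsilon_1,\dots,\varepsilon_m)$, Theorem \ref{th-extremenorm} gives $\|\mathcal{T}\|_p=k^{1/p}$ (each $\|T_ix\|=1$ at a sign vector). For $w_p$ I would use Theorem \ref{th-extreme}: as $E_{(\ell_\infty^m)^*}=E_{\ell_1^m}=\{\pm e_j^*\}$, any $(x,x^*)\in G_{\ell_\infty^m}$ forces $x^*=\varepsilon_j e_j^*$ with $x=(\varepsilon_1,\dots,\varepsilon_m)$, whence $x^*(T_ix)=\varepsilon_i\varepsilon_j\delta_{ij}$ and $\sum_{i=1}^k|x^*(T_ix)|^p$ equals $1$ if $j\le k$ and $0$ otherwise, so $w_p(\mathcal{T})=1$. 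Hence $w_p(\mathcal{T})/\|\mathcal{T}\|_p=k^{-1/p}$, the matching upper bound. Part (ii) then follows from (i) by the direct-sum theorem: writing $\ell_\infty=[\ell_\infty^k\oplus\ell_\infty]_{\ell_\infty}$ and applying Theorem \ref{th-directsum} gives $n_{(p,k)}(\ell_\infty)\le n_{(p,k)}(\ell_\infty^k)=k^{-1/p}$ (equivalently Corollary \ref{cor-direct}(i) applies since $n(\ell_\infty)=1$), which with the lower bound yields equality.

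The crux is part (iii), where the coordinate-projection tuple is useless (its ratio is $1$), so a genuinely different construction exploiting the $\ell_1$ geometry is needed. Here I would work in $\ell_1^m$ ($m\ge 2$) and set $T_1e_1=e_1+e_2$, $T_2e_1=e_1-e_2$, with $T_1e_j=T_2e_j=\theta$ for $j\ge 2$, and $\mathcal{T}=(T_1,T_2)$. Since $E_{\ell_1^m}=\{\pm e_j\}$, Theorem \ref{th-extremenorm} gives $\|\mathcal{T}\|_p=(\|T_1e_1\|^p+\|T_2e_1\|^p)^{1/p}=(2^p+2^p)^{1/p}=2^{1+1/p}$. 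For $w_p$, a pair $(x,x^*)\in G_{\ell_1^m}$ has $x=e_1$ up to sign (the cases $x=\pm e_j$, $j\ge2$, contribute $0$) and $x^*=(1,\varepsilon_2,\dots,\varepsilon_m)$, so $x^*(T_1e_1)=1+\varepsilon_2$ and $x^*(T_2e_1)=1-\varepsilon_2$, giving $|x^*(T_1e_1)|^p+|x^*(T_2e_1)|^p=2^p$ for either value $\varepsilon_2=\pm1$; thus $w_p(\mathcal{T})=2$. Hence $w_p(\mathcal{T})/\|\mathcal{T}\|_p=2/2^{1+1/p}=2^{-1/p}$, proving (iii) with the lower bound. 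Finally (iv) follows from (iii) exactly as (ii) followed from (i): writing $\ell_1=[\ell_1^2\oplus\ell_1]_{\ell_1}$ and applying Theorem \ref{th-directsum} (or Corollary \ref{cor-direct}(i), using $n(\ell_1)=1$) gives $n_{(p,2)}(\ell_1)\le n_{(p,2)}(\ell_1^2)=2^{-1/p}$, matching the lower bound.

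The main obstacle I anticipate is precisely locating the tuple for (iii): because $n(\ell_1^m)=1$ forces $w(T)=\|T\|$ for every single operator, the gap $2^{-1/p}<1$ must come entirely from the interaction of $T_1$ and $T_2$, i.e.\ from arranging that the two sign-vector functionals witnessing $\|T_1e_1\|$ and $\|T_2e_1\|$ cannot be realized simultaneously by one admissible $(x,x^*)$. The choice $T_1e_1=e_1+e_2$, $T_2e_1=e_1-e_2$ is designed so that any admissible functional, forced to satisfy $\varepsilon_1=1$, must split its mass between the two outputs; verifying that this split caps $w_p$ at $2$ while $\|\mathcal{T}\|_p=2^{1+1/p}$ is the one computation requiring care, and everything else is bookkeeping with the extreme-point descriptions of $B_{\ell_1^m}$, $B_{\ell_\infty^m}$ and their duals.
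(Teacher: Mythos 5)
Your proposal is correct (in the same implicit real-scalar setting as the paper's own argument), and parts (i), (ii) and (iv) coincide with the paper's proof essentially verbatim: the same coordinate-projection tuple and extreme-point computations for (i), the lower bounds from Theorem \ref{th-boundnpkx} throughout, and the decompositions $\ell_\infty=[\ell_\infty^k\oplus\ell_\infty]_{\ell_\infty}$ and $\ell_1=[\ell_1^2\oplus\ell_1]_{\ell_1}$ with Theorem \ref{th-directsum} for (ii) and (iv). The genuine difference is part (iii). The paper handles $\ell_1^2$ by invoking the isometric isomorphism $\ell_1^2\cong\ell_\infty^2$, so that (i) with $k=m=2$ applies, and then passes to $\ell_1^m$ via $\ell_1^m=[\ell_1^2\oplus\ell_1^{m-2}]_{\ell_1}$ and Theorem \ref{th-directsum}. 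You instead exhibit the explicit tuple $T_1e_1=e_1+e_2$, $T_2e_1=e_1-e_2$, $T_ie_j=\theta$ for $j\ge 2$, directly in $\ell_1^m$, and compute $\|\mathcal{T}\|_p=2^{1+1/p}$ and $w_p(\mathcal{T})=2$ from Theorems \ref{th-extremenorm} and \ref{th-extreme}. Your route is self-contained, works uniformly for all $m\ge 2$ with no direct-sum step, and avoids a fact (the isometry $\ell_1^2\cong\ell_\infty^2$) that is special to dimension two; it is in the same spirit as the paper's Hilbert-space tuples in Theorem \ref{th-l2}, where all the operators feed off a single coordinate. The paper's route is shorter because it recycles (i).

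One caveat, which affects the paper's proof equally and so is not a gap relative to it: your evaluation $|x^*(T_1e_1)|^p+|x^*(T_2e_1)|^p=|1+\varepsilon_2|^p+|1-\varepsilon_2|^p=2^p$ takes $\varepsilon_2=\pm 1$, i.e.\ real scalars. Over $\mathbb{C}$ the relevant extreme functionals have unimodular (not merely $\pm 1$) entries, and for $1\le p<2$ the supremum of $|1+\varepsilon_2|^p+|1-\varepsilon_2|^p$ over $|\varepsilon_2|=1$ is $2^{p/2+1}$, attained at $\varepsilon_2=\pm i$, so your tuple only yields the bound $w_p(\mathcal{T})/\|\mathcal{T}\|_p=2^{-1/2}>2^{-1/p}$. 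The paper's argument has the same restriction, since $(a,b)\mapsto(a+b,a-b)$ is an isometry from $\ell_1^2$ onto $\ell_\infty^2$ only over $\mathbb{R}$; in fact your construction is the more robust of the two, as it also settles the complex case whenever $p\ge 2$, where the isometry argument is unavailable.
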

\begin{proof}
(i) Let $1\leq k\leq m.$ First we prove that $n_{(p,k)}(\ell_\infty^m)=\frac{1}{k^{1/p}}.$ It is well-known that $n(\ell_\infty^m)=1.$ Now, from Theorem \ref{th-boundnpkx}, it follows that 
\begin{equation}\label{eq-05}
\frac{1}{k^{1/p}}\leq n_{(p,k)}(\ell_\infty^m).	
	\end{equation}
 To prove the reverse inequality, define $T_i\in L(\ell_\infty^m)$ for $1\leq i\leq k$ as follows:
\[T_ie_i=e_i, ~T_ie_j=\theta,~\text{for}~j\in \{1,\ldots,m\}\setminus \{i\},\] where $\{e_i:1\leq i\leq m\}$ is the standard ordered basis of $\ell_\infty^m.$ Suppose $\mathcal{T}=(T_1,\ldots,T_k)\in L(\ell_\infty^m)^k.$ Clearly, $x=(a_1,a_2,\ldots,a_m)\in E_{\ell_\infty^m}$ if and only if $|a_i|=1,$ for all $1\leq i\leq m.$ Note that, $(x,x^*)\in G_{\ell_\infty^m}$ if and only if $x^*\in \{a_1e_1,a_2e_2,\ldots,a_me_m\}.$ Thus, for $1\leq i\leq k,1\leq j\leq m,$ and $x^*=a_je_j,$ 
\[x^*(T_ix)=a_je_j(a_ie_i)=\begin{cases}
	&0, ~~j\neq i\\
	                &1, ~~j=i.
	\end{cases}
\]
Now,  using Theorem \ref{th-extremenorm}, we get
\begin{align*}
\|\mathcal{T}\|_p&=\sup\Big\{\big(\sum_{i=1}^{k}\|T_i(a_1,a_2,\ldots,a_m)\|^p\big)^{1/p}:(a_1,a_2,\ldots,a_m)\in E_{\ell_\infty^m}\Big\}\\
&=\sup \Big\{\big(\sum_{i=1}^{k}|a_i|^p\big)^{1/p}:|a_i|=1,~\forall ~1\leq i\leq k\Big\}\\
&=k^{1/p}.
\end{align*}
On the other hand, using Theorem \ref{th-extreme}, we get
\begin{align*}
w_p(\mathcal{T})&=\max_{(x,x^*)\in G_{\ell_\infty^m}}\Big(\sum_{i=1}^k|x^*(T_ix)|^p\Big)^{1/p}\\
&=\max\Big\{\Big(\sum_{i=1}^k|a_je_j(a_ie_i))|^p\Big)^{1/p}:|a_j|=1,~\forall~1\leq j\leq m\Big\}	\\
&=1.
\end{align*}
Therefore,
\begin{equation}\label{eq-06}
	n_{(p,k)}(\ell_\infty^m)\leq \frac{w_p(\mathcal{T})}{\|\mathcal{T}\|_p}=\frac{1}{k^{1/p}}.
	\end{equation} 
Now, from (\ref{eq-05}) and (\ref{eq-06}), it follows that $n_{(p,k)}(\ell_\infty^m)=\frac{1}{k^{1/p}}.$\\

(ii) It is well-known that $n(\ell_\infty)=1.$ Hence, by Theorem \ref{th-boundnpkx},
\[\frac{1}{k^{1/p}}=\frac{n(\ell_\infty)}{k^{1/p}}\leq n_{(p,k)}(\ell_\infty).\]
For the reverse inequality, choose $m\geq k.$ Then we can write $\ell_\infty=[\ell_\infty^m\oplus\ell_\infty]_{\ell_\infty}.$ Therefore, by (i) and Theorem \ref{th-directsum}, we get $$n_{(p,k)}(\ell_\infty)\leq n_{(p,k)}(\ell_\infty^m)=\frac{1}{k^{1/p}}.$$
Thus, $n_{(p,k)}(\ell_\infty)=\frac{1}{k^{1/p}}.$\\

(iii) Observe that, for $m=2,$ the result follows from the fact that $\ell_\infty^2$ is isometrically isomorphic to $\ell_1^2.$ Suppose that $m>2.$ Then $\ell_1^m=[\ell_1^2\oplus\ell_1^{m-2}]_{\ell_1}.$ Therefore, from Theorem \ref{th-directsum}, we get $$n_{(p,2)}(\ell_1^m)\leq n_{(p,2)}(\ell_1^2)=\frac{1}{2^{1/p}}.$$ On the other hand, from Theorem \ref{th-boundnpkx}, it follows that  $$\frac{1}{2^{1/p}}=\frac{n(\ell_1^m)}{2^{1/p}}\leq n_{(p,2)}(\ell_1^m).$$
This completes the proof of (iii).\\

(iv) Note that, $\ell_1=[\ell_1^2\oplus\ell_1]_{\ell_1}.$ Now, proceeding similarly as (iii), we get the desired result.
\end{proof}
We end the section by computing the $(p,k)$-th joint numerical index of Hilbert spaces. 
\begin{theorem}\label{th-l2}
Suppose $m\in \mathbb{N}\setminus\{1,2\}. $ Let $\ell_2^m$ and $\ell_2$ be complex Hilbert spaces. Then \\
(i) $n_{(2,k)}(\ell_2^m)=\frac{1}{2\sqrt{k}},$ where $1\leq k<m.$	\\
(ii) $n_{(2,k)}(\ell_2)=\frac{1}{2\sqrt{k}}.$\\
(iii) $n_{(p,2)}(\ell_2^m)=\frac{1}{{2^{1+\frac{1}{p}}}},$ where $p>2.$\\
(iv) $n_{(p,2)}(\ell_2)=\frac{1}{{2^{1+\frac{1}{p}}}},$ where $p>2.$
\end{theorem}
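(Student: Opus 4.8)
The plan is to compute the $(p,k)$-th joint numerical index of the complex Hilbert spaces $\ell_2^m$ and $\ell_2$ by mimicking the strategy already used for $\ell_\infty^m,\ell_\infty,\ell_1^m,\ell_1$ in Theorem \ref{th-l1}. The two tools are the general bound $\frac{n(X)}{k^{1/p}}\le n_{(p,k)}(X)\le n(X)$ from Theorem \ref{th-boundnpkx} together with the known value $n(\ell_2^m)=n(\ell_2)=\frac12$ for complex Hilbert spaces, and the direct-sum inequality of Theorem \ref{th-directsum} to pass from the finite-dimensional case to $\ell_2$. For part (i), specializing $p=2$ and $n(X)=\frac12$, Theorem \ref{th-boundnpkx} immediately gives the lower bound $n_{(2,k)}(\ell_2^m)\ge \frac{1}{2\sqrt{k}}$. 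The whole content is therefore the matching \emph{upper} bound: I must exhibit a single tuple $\mathcal{T}=(T_1,\dots,T_k)\in L(\ell_2^m)^k$ with $\frac{w_2(\mathcal{T})}{\|\mathcal{T}\|_2}=\frac{1}{2\sqrt{k}}$.

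\textbf{Constructing the extremal tuple.} The natural candidate is built from the operator that achieves the numerical index of a single Hilbert space, namely a nilpotent (weighted shift) block. Concretely, in the real/complex $\ell_2^2$ block the operator $T(a,b)=(b,0)$ (or $(0,a)$) has $\|T\|=1$ and $w(T)=\frac12$, witnessing $n(\ell_2)=\frac12$. Using the hypothesis $k<m$, I would place $k$ such shift-type operators on disjoint (or suitably staggered) pairs of coordinates among the $m$ available basis vectors, so that the $T_i$ act on essentially independent two-dimensional blocks. Since the excerpt permits me to invoke Theorem \ref{th-extremenorm} and Theorem \ref{th-extreme}, I would compute $\|\mathcal{T}\|_2=\big(\sum_i\|T_ix\|^2\big)^{1/2}$ over $x\in S_{\ell_2^m}$ and $w_2(\mathcal{T})=\sup\big(\sum_i|\langle T_ix,x\rangle|^2\big)^{1/2}$ directly; by construction $\sum_i\|T_ix\|^2$ should equal $\|x\|^2$ (giving $\|\mathcal{T}\|_2=1$) while each $|\langle T_ix,x\rangle|\le\frac12\|T_i x\|\cdot\|x\|$ forces $w_2(\mathcal{T})=\frac{1}{2}\cdot$ (the $\ell_2$-normalization), yielding $w_2(\mathcal{T})=\frac{1}{2\sqrt{k}}$ after the tuple is rescaled so that $\|\mathcal{T}\|_2=1$. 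Part (iii) proceeds identically but with the single $\ell_2^2$ block computed against the $p$-norm: there the $p=2$ numerical-radius factor $\frac12$ combines with the $\ell_p$-normalization over two coordinates to give $\frac{1}{2^{1+1/p}}=\frac{1}{2}\cdot\frac{1}{2^{1/p}}$, exactly matching the lower bound $\frac{n(\ell_2^m)}{2^{1/p}}$.

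\textbf{Passing to $\ell_2$.} For parts (ii) and (iv), the lower bounds again come from Theorem \ref{th-boundnpkx}, and the upper bounds follow by writing $\ell_2=[\ell_2^m\oplus\ell_2]_{\ell_2}$ for a sufficiently large $m$ (with $m>k$ in case (ii) and $m\ge 2$ in case (iv)) and applying Theorem \ref{th-directsum} to get $n_{(p,k)}(\ell_2)\le n_{(p,k)}(\ell_2^m)$, at which point parts (i) and (iii) close the argument. This is precisely the reduction already executed for $\ell_\infty$ in Theorem \ref{th-l1}(ii).

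\textbf{The main obstacle} will be verifying the exact value of $w_2$ (resp.\ $w_p$) of the constructed tuple, rather than merely bounding it. The lower bounds are automatic, but to get equality in the upper bound I must confirm that the supremum of $\sum_i|\langle T_ix,x\rangle|^2$ is genuinely attained at the $\frac12$-factor simultaneously across all blocks, and that no single vector $x$ can correlate with several shift blocks well enough to exceed $\frac{1}{2\sqrt k}$ after normalization. Because the blocks are supported on disjoint coordinates, the cross terms vanish and the optimization decouples into $k$ independent two-dimensional problems, each contributing the factor $\frac14$ to $\sum_i|\langle T_ix,x\rangle|^2$ weighted by the mass $x$ places on that block; the constraint $\sum$ (block masses) $=1$ then makes the total a weighted average capped by $\frac14$, so $w_2(\mathcal{T})\le\frac12$ before normalization with equality achievable. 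The delicate point is therefore the exact attainment of the maximum-radius vector on each block and the use of the complex scalar field (where $w(T)=\frac12\|T\|$ for the $2\times2$ nilpotent, whereas in the real case the numerical radius of such a block can differ), which is why the hypothesis that $\ell_2^m,\ell_2$ are \emph{complex} Hilbert spaces is essential and must be used explicitly.
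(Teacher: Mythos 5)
Your overall skeleton (lower bound from Theorem \ref{th-boundnpkx} with $n(\ell_2^m)=\frac12$, an explicit extremal tuple for the upper bound, and the reduction $\ell_2=[\ell_2^{k+1}\oplus\ell_2]_{\ell_2}$ via Theorem \ref{th-directsum} for parts (ii) and (iv)) is the same as the paper's, and those outer layers of your argument are fine. The gap is in the only nontrivial step: the extremal tuple. If the $T_i$ are shift-type operators supported on \emph{disjoint} two-dimensional blocks (say $T_ie_{2i}=e_{2i-1}$ and $T_i$ kills all other basis vectors), then both quantities concentrate on a single block: writing $s_i$ for the mass $x$ places on block $i$, one has $\sum_i\|T_ix\|^2=\sum_i|x_{2i}|^2\le 1$ with equality attainable, and $\sum_i|\langle T_ix,x\rangle|^2=\sum_i|x_{2i}|^2|x_{2i-1}|^2\le\frac14\sum_i s_i^2\le\frac14$ with equality attainable (all mass on one block). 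Hence $\|\mathcal{T}\|_2=1$ and $w_2(\mathcal{T})=\frac12$, so this tuple only witnesses $n_{(2,k)}(\ell_2^m)\le\frac12$ --- the same bound a single operator gives, with no gain in $k$. Your final paragraph in fact computes exactly this ($w_2(\mathcal{T})\le\frac12$ with equality), and the assertion that one gets $\frac{1}{2\sqrt k}$ ``after the tuple is rescaled'' cannot be repaired: the quotient $w_2(\mathcal{T})/\|\mathcal{T}\|_2$ is scale-invariant, so no rescaling (of the whole tuple or of the individual $T_i$) moves it off $\frac12$. A secondary defect is that disjoint blocks require $2k\le m$, which does not follow from the hypothesis $k<m$ (e.g.\ $k=2$, $m=3$).

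The construction that works (the paper's, following \cite[Th. 9]{DM}) is the opposite of decoupling: all $T_i$ share the \emph{same} source vector and have orthogonal ranges, namely $T_ie_m=\frac{1}{\sqrt k}e_i$ and $T_ie_j=\theta$ for $j<m$ (this is where $k<m$ is used). Sharing the source is what manufactures the factor $\frac{1}{\sqrt k}$: the joint norm aggregates across the tuple, $\sum_i\|T_ix\|^2=\sum_{i=1}^k\frac1k|x_m|^2=|x_m|^2$, so $\|\mathcal{T}\|_2=1$ even though each $T_i$ has norm only $\frac{1}{\sqrt k}$; the joint numerical radius cannot aggregate, since $\sum_i|\langle T_ix,x\rangle|^2=\frac1k|x_m|^2\sum_{i=1}^k|x_i|^2\le\frac1k|x_m|^2(1-|x_m|^2)\le\frac{1}{4k}$, giving $w_2(\mathcal{T})=\frac{1}{2\sqrt k}$ exactly. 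For part (iii) the paper uses the same shared-source pattern in $\ell_2^3$ scaled by $2^{-1/p}$, i.e.\ $T_1(x,y,z)=(z/2^{1/p},0,0)$ and $T_2(x,y,z)=(0,z/2^{1/p},0)$; here $p>2$ enters to force $(|zx|^p+|zy|^p)^{1/p}\le|z|\big(|x|^2+|y|^2\big)^{1/2}\le\frac12$, so $w_p(\mathcal{T})=\frac{1}{2^{1+1/p}}$. Your description of (iii) (two operators on ``a single $\ell_2^2$ block'') inherits the same flaw as (i): for instance $T_1(a,b)=(b,0)$, $T_2(a,b)=(0,a)$ gives $\|\mathcal{T}\|_p=1$ but $w_p(\mathcal{T})=2^{\frac1p-1}$, which is strictly larger than $\frac{1}{2^{1+1/p}}$ and proves nothing.
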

\begin{proof}
(i) It follows from Theorem \ref{th-boundnpkx}	that 
\begin{equation}\label{eq-l201}
\frac{1}{2\sqrt{k}}=\frac{n(\ell_2^m)}{\sqrt{k}}\leq n_{(2,k)}(\ell_2^m).	
	\end{equation}
To prove the reverse inequality, we exhibit $\mathcal{T}\in L(\ell_2^m)$ such that $\|\mathcal{T}\|_2=1$ and $w_2(\mathcal{T}) =\frac{1}{2\sqrt{k}}.$ For this, we follow the idea of \cite[Th. 9]{DM}. Suppose $\{e_1,e_2,\ldots,e_m\}$ is the standard ordered basis of $\ell_2^m.$ For $1\leq i\leq k,$ define $T_i\in L(\ell_2^m)$ as follows.
\[T_ie_j=\begin{cases}
	&\theta, ~\text{if}~1\leq j<m,\\
	&\frac{1}{\sqrt{k}}e_i,~\text{if} ~j=m.
	\end{cases}\] 
Let $\mathcal{T}=(T_1,T_2,\ldots,T_k)\in L(\ell_2^m)^k.$ Then 
\begin{align*}
\|\mathcal{T}\|_2&=\sup\Big\{\big(\sum_{i=1}^k\|T_i(x_1,x_2,\ldots,x_m)\|^2\big)^{1/2}:\sum_{j=1}^m|x_j|^2=1\Big\}\\
&=\sup\{|x_m|:\sum_{j=1}^m|x_j|^2=1\}\\
&=1.
\end{align*}
Moreover, 
\begin{align*}
w_2(\mathcal{T})&=\sup\Big\{\big(\sum_{i=1}^k|\langle T_i(x_1,\ldots,x_m),(x_1,\ldots,x_k)\rangle|^2\big)^{1/2}:\sum_{j=1}^m|x_j|^2=1\Big\}	\\
&=\sup\Big\{\big(\sum_{i=1}^k|\frac{x_mx_i}{\sqrt{k}}|^2\big)^{1/2}:\sum_{j=1}^m|x_j|^2=1\Big\}	\\
&=\frac{1}{\sqrt{k}}\sup\Big\{\big(|x_m|^2\sum_{i=1}^k|x_i|^2\big)^{1/2}:\sum_{j=1}^m|x_j|^2=1\Big\}\\
&=\frac{1}{\sqrt{k}}\sup\Big\{\big(|x_m|^2\sum_{i=1}^k|x_i|^2\big)^{1/2}:\sum_{j=1}^k|x_j|^2+|x_m|^2=1\Big\}\\
&=\frac{1}{\sqrt{k}}\sup\Big\{\big(|x_m|^2(1-|x_m|^2)\big)^{1/2}:|x_m|\leq1\Big\}\\
&=\frac{1}{2\sqrt{k}}.
\end{align*}
Thus, 
\begin{equation}\label{eq-l202}
	n_{(2,k)}(\ell_2^m)\leq w_2(\mathcal{T})=\frac{1}{2\sqrt{k}}.
	\end{equation}
Combining (\ref{eq-l201}) and (\ref{eq-l202}), we get the conclusion of (i).\\

(ii) Note that, $\ell_2=[\ell_2^{k+1}\oplus\ell_2]_{\ell_2}.$ Therefore,
\[\frac{1}{2\sqrt{k}}=\frac{n(\ell_2)}{\sqrt{k}}\leq n_{(2,k)}(\ell_2)\leq n_{(2,k)}(\ell_2^{k+1})=\frac{1}{2\sqrt{k}},\]
where the first inequality follows from Theorem \ref{th-boundnpkx}, the second inequality follows from Theorem \ref{th-directsum} and the last equality follows from (i). This completes the proof of (ii).\\

(iii) Suppose $p>2.$ If we can prove $n_{(p,2)}(\ell_2^3)=\frac{1}{{2^{1+\frac{1}{p}}}},$ then for  $m>3,$ writing $\ell_2^m=[\ell_2^3\oplus \ell_2^{m-3}]_{\ell_2}$ and using Theorem \ref{th-boundnpkx} and Theorem \ref{th-directsum}, we get
\[\frac{1}{2^{1+\frac{1}{p}}}=\frac{n(\ell_2^m)}{2^{1/p}}\leq n_{(p,2)}(\ell_2^m)\leq n_{(p,2)}(\ell_2^3)=\frac{1}{2^{1+\frac{1}{p}}},\]
proving the result for $m>3.$ Thus, we only assume that $m=3.$ Clearly, 
\begin{equation}\label{eq-l203}
	\frac{1}{2^{1+\frac{1}{p}}}=\frac{n(\ell_2^3)}{2^{1/p}}\leq n_{(p,2)}(\ell_2^3).
\end{equation}  
To prove the reverse in equality, we use the same operators as in (i), i.e., we consider the operators $T_1,T_2\in L(\ell_2^3)$ defined as follows:
\[T_1(x,y,z)=\Big(\frac{z}{2^{1/p}},0,0\Big),~T_2(x,y,z)=\Big(0,\frac{z}{2^{1/p}},0\Big),~\forall~(x,y,z)\in \ell_2^3.\]
Suppose $\mathcal{T}=(T_1,T_2)\in L(\ell_2^3)^2.$ Then 
\begin{align*}
	\|\mathcal{T}\|_p&=\sup\Big\{\big(\|T_1(x,y,z)\|^p+\|T_2(x,y,z)\|^p\big)^{1/p}:|x|^2+|y|^2+|z|^2=1\Big\}\\
	&=\sup\{|z|:|x|^2+|y|^2+|z|^2=1\}\\
	&=1.
\end{align*}
On the other hand, 
\begin{align*}
&w_p(\mathcal{T})\\
&=\sup\Big\{\big(|\langle T_1(x,y,z),(x,y,z)\rangle|^p+|\langle T_2(x,y,z),(x,y,z)\rangle|^p\big)^{\frac{1}{p}}:|x|^2+|y|^2+|z|^2=1\Big\}	\\
&=\frac{1}{2^{\frac{1}{p}}}\sup\Big\{(|zx|^p+|zy|^p)^{\frac{1}{p}}:|x|^2+|y|^2+|z|^2=1\Big\}\\
&=\frac{1}{2^{1+\frac{1}{p}}},~(\text{since ~}p>2).
\end{align*}
Therefore, 
\begin{equation}\label{eq-l204}
	n_{(p,2)}(\ell_2^3)\leq w_p(\mathcal{T})=\frac{1}{2^{1+\frac{1}{p}}}.
	\end{equation}
Now, it follows from (\ref{eq-l203}) and (\ref{eq-l204}) that $n_{(p,2)}(\ell_2^3)=\frac{1}{2^{1+\frac{1}{p}}}.$ This completes the proof of (iii).\\

(iv) Proceeding similarly as (ii) and using (iii), we get the desired result.

\end{proof}

		

		\bibliographystyle{amsplain}

	\end{document}